\newtheorem{theorem}{Theorem}[section]
\newtheorem{conjecture}[theorem]{Conjecture}
\newtheorem{problem}[theorem]{Problem}
\newtheorem{lemma}[theorem]{Lemma}
\theoremstyle{definition}
\def\epsilon{\varepsilon}
\def\claw{\text{claw}}
\def\butterfly{\text{butterfly}}
\def\E{\textit{E}}
\DeclareMathOperator{\dist}{dist}
\title{Cops and Robbers on Graphs with Path Constraints}
\author[1]{Alexander Clow\,\thanks{Supported by the Natural Sciences and Engineering Research Council of Canada (NSERC) through PGS D-601066-2025}}
\author[2]{Erin Meger\, \thanks{Supported by  the Natural Sciences and Engineering Research Council of Canada NSERC (2025-05700) and by Queen’s University.}}
\date{}
\affil[1]{ \small{Department of Mathematics, Simon Fraser University}}
\affil[2]{ \small{School of Computing, Queens University}}
\begin{document}

\maketitle

\begin{abstract}
    In 2019, Sivaraman conjectured that every $P_k$-free graph has cop number at most $k-3$.
    In the same year, Liu proved this conjecture for $(P_k,\claw)$-free graphs.
    Recently Chudnovsky, Norin, Seymour, and Turcotte proved this conjecture for $P_5$-free graphs.
    For $k\geq 6$ the conjecture remains widely opened.
    Let the $\E$ graph be the $\claw$ with two subdivided edges.
    We show that all $(P_k,\E \,)$-free graphs have cop number at most $\lceil \frac{k-1}{2} \rceil +3$, 
    which improves and generalizes Liu's result for $(P_k,\claw)$-free graphs.
    We also prove that if $G$ is a graph whose longest path is length $p$, then 
    $G$ has cop number at most $\lceil \frac{2p}{3} \rceil+3$.
    This improves a bound of Joret, Kami\'nski, and Theis.
    Our proof relies on demonstrating that all $(P_k,\claw,\butterfly,C_4,C_5)$-free graphs have cop number at most $\lceil\frac{k-1}{3}\rceil +3$.
\end{abstract}

\section{Introduction}
\pagenumbering{arabic}

Cops and Robbers is a two-player game played on a connected graph, see \cite{AIGNER1984, nowakowski1983vertex,quilliot1983problemes}. 
To begin the game, the cop player places $k$ cops onto vertices of the graph,
then the robber player chooses a vertex to place the robber. 
Players take turns moving. 
During the cop player’s turn, each cop either moves to an adjacent vertex or passes and remains at their current vertex. 
Similarly, on the robber player’s turn, the robber either moves to an adjacent vertex or passes and remains at their current vertex.
The cop player wins if after finitely many moves a cop can move onto the vertex occupied by the robber, called capturing.
The robber player wins if the robber can provide a strategy to evade capture indefinitely. 
The least number of cops required for the cop player to win, regardless of the robber’s strategy, is called  the cop number of a graph, denoted $c(G)$ for a graph $G$.
If $c(G)\leq k$, then we say $G$ is $k$-cop win.
We suppose all graphs are connected, unless stated otherwise.
For more background on Cops and Robbers we recommend \cite{bonato2011game}.

We define the graphs $P_t$, $C_t$, and $K_t$ as the path with $t$ vertices, the cycle with $t$ vertices, and the complete graph on $t$ vertices, respectively.
We call the complete bipartite graph $K_{1,3}$ the \textit{claw}
, and we call the $5$ vertex graph given by identifying two triangles at a vertex the \textit{butterfly}.
The $\E$ graph is give by subdividing two edges of the $\claw$.
If $G$ and $H$ are graphs, then we denote the disjoint union of $G$ and $H$ by $G+H$.
For any graph $G$, we use $mG$ to denote the disjoint union of $m$ copies of $G$, and we let $\overline{G}$ denote the compliment of $G$.
We let $\alpha(G)$ denote the independence number of $G$.
For more background and definitions in graph theory we refer the reader to \cite{west2001introduction}.

\begin{figure}[h!]
    \centering
    \includegraphics[scale = 1.0]{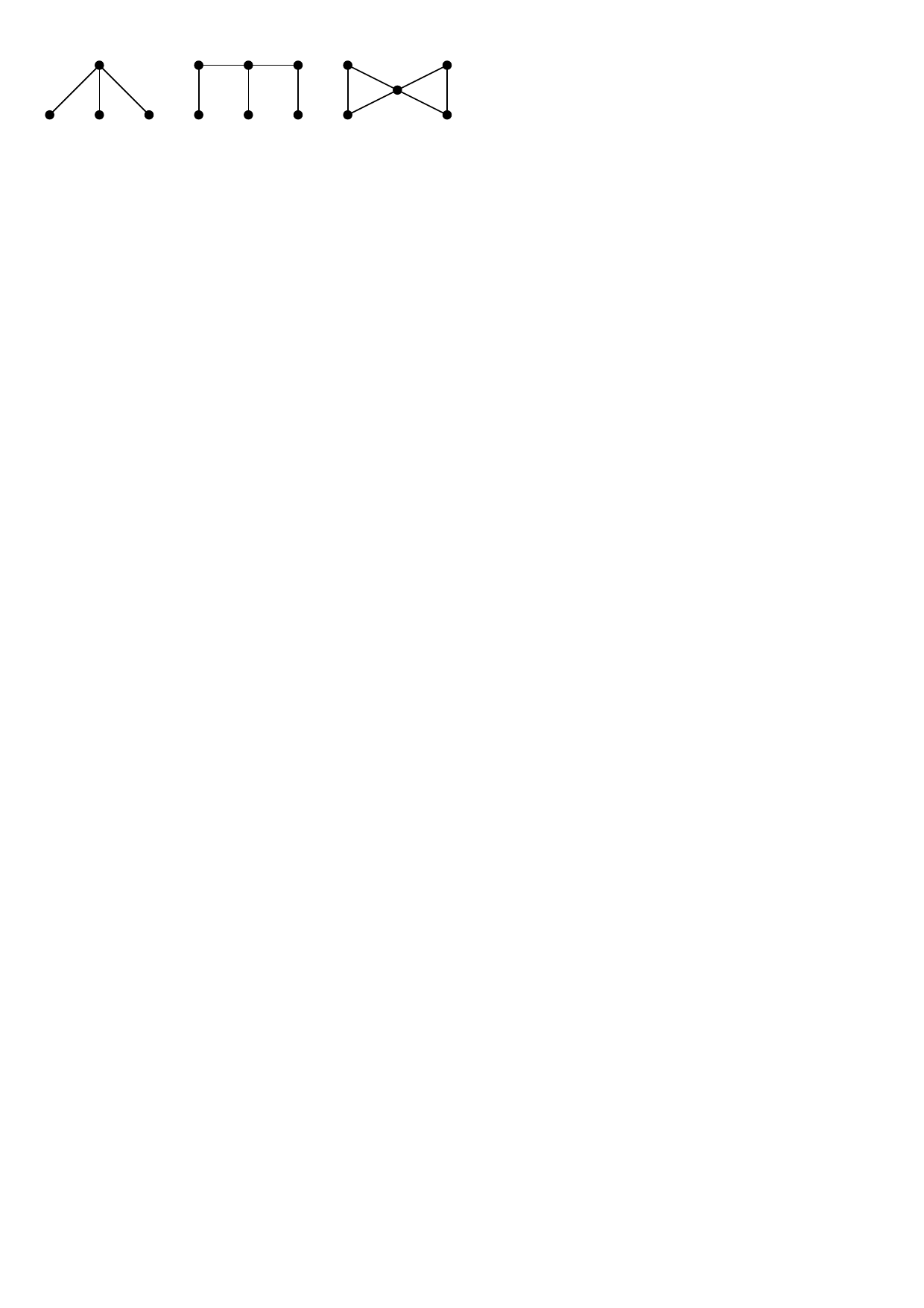}
    \caption{From left to right the $\claw$, $\E$, and $\butterfly$ graphs are shown.}
    \label{fig:small graphs}
\end{figure}

It is standard to consider classes of graphs defined by forbidden substructures such as minors or induced subgraphs.
A graph $G$ is $H$-free or $H$-minor free if $G$ does not
contain, respectively, any induced subgraph or minor which is isomorphic to $H$. 
Of particular importance for this paper will be the class of $P_k$-free graphs.
Also of note are graph with independence number at most $k$, 
which is equivalently the class of $(k+1)K_1$-free graphs.
If $\mathcal{H}$ is a list of graphs, we say $G$ is $\mathcal{H}$-free if $G$ has no graph in $\mathcal{H}$ as an induced subgraph.
Also observe $G$ having a longest path of length $p < \ell$ is equivalent to having no $P_\ell$ as a subgraph.

Cops and Robbers has been studied extensively in relation to forbidden minors and forbidden induced subgraphs.
Andreae \cite{andreae1986pursuit} showed that for all graphs $H$, there exists a constant $m = m(H)$
such that if $G$ has no $H$-minor, then $c(G) \leq m$.
The constant $m = m(H)$ here was recently improved by Kenter, Meger, and Turcotte \cite{kenter2025improved},
particularly for small or sparse graphs $H$.
When forbidding an induced subgraph $H$, or a subgraph $H$, one cannot guarantee the cop number of $H$-free graphs is bounded.
In fact, it was shown by Joret, Kami{\'n}ski, and Theis \cite{joret2010cops} 
that $H$-free graphs have bounded cop number if and only if $H$ is a linear forest (i.e a disjoint union of paths).
This characterization of which forbidden induced subgraphs admit classes with bounded cop number was extended by Masjoody and Stacho \cite{masjoody2020cops}
who demonstrated analogous results when forbidding multiple induced subgraphs simultaneously.

In \cite{joret2010cops} Joret, Kami{\'n}ski, and Theis demonstrated that 
every connected $P_k$-free graph is $(k-2)$-cop win.
However, their argument missed key details.
This lead Sivaraman \cite{sivaraman2019application} to give a different and simpler proof that every connected $P_k$-free graph is $(k-2)$-cop win.
This simpler proof involves the well known Gy\'arf\'as path argument,
and has the added benefit of bounding the number of turns it takes the cops to catch the robber.

When $k$ is large there is no evidence that this bound is optimal.
This lead Sivaraman to the following conjecture.

\begin{conjecture}[Sivaraman \cite{sivaraman2019application}]\label{Conj: main}
    For all $k\geq 5$, if $G$ is $P_k$-free, then $c(G) \leq k - 3$.
\end{conjecture}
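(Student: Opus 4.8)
The plan is to push the Gyárfás path argument — which in Sivaraman's hands already gives $c(G)\leq k-2$ — so as to recycle one more cop. Recall the set-up: one greedily builds an induced path $p_1p_2\cdots p_i$ while maintaining that the robber is confined to a single connected component $C_i$ of $G-N[\{p_1,\dots,p_i\}]$, with a cop guarding each $p_j$ and with each extension strictly shrinking the robber's territory; since $G$ is $P_k$-free the path never reaches $k$ vertices, so $i\leq k-1$, and an additional observation about the endpoints of the path recycles one guard, giving $k-2$. To reach $k-3$ I would look for a second guard to free up, exploiting the tension that a $P_k$-free graph with rich local structure cannot also sustain a long induced path of cops.

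The key steps, in order. First, re-derive the path invariant and the territory-shrinking bookkeeping in a form flexible enough to accommodate the case analysis below; this is routine. Second, reduce to the case that $G$ is not $(P_k,\claw)$-free, since otherwise Liu's theorem already gives $c(G)\leq k-3$; with further work one should be able to assume $G$ contains an induced $\E$, $\butterfly$, $C_4$, or $C_5$. Third — the crux — quantify how these local configurations force extra vertices onto any Gyárfás path, so that the path is strictly shorter than $k-1$ by a margin that pays for the additive constant. It is precisely this trade-off that, carried out crudely, yields the $\lceil(k-1)/2\rceil+3$ bound for $(P_k,\E)$-free graphs and the $\lceil(k-1)/3\rceil+3$ bound for $(P_k,\claw,\butterfly,C_4,C_5)$-free graphs; the hope is that a sharper amortization, together with a gadget-by-gadget analysis, recovers the clean $k-3$ with no auxiliary forbidden subgraph at all. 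Fourth, re-examine the endpoints of the maximal path and the clean-up phase, aiming to show that two dedicated guards there can always be reduced to one mobile cop.

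The main obstacle is the third step in full generality. When $G$ simultaneously contains claws, induced four- and five-cycles, and butterflies, a Gyárfás path can branch in ways that let the robber shuttle among dense pockets, and the tidy accounting ``each bad gadget costs the robber a path vertex'' collapses, because one gadget can be reused against several successive extensions. Every known partial result — Liu's theorem and the two theorems proved in this paper — buys its improvement precisely by forbidding enough gadgets to restore that accounting; dispensing with any such restriction while still beating $k-2$ by one appears to require a genuinely new idea, which is why Conjecture~\ref{Conj: main} remains open for every $k\geq 6$.
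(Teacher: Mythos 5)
There is a genuine gap, and in fact your own write-up concedes it: the statement you were asked to prove is Conjecture~\ref{Conj: main}, which is not proved in this paper (or anywhere) for $k\geq 6$; the paper only establishes it in special cases indirectly via prior work ($P_5$-free graphs by Chudnovsky--Norin--Seymour--Turcotte, $(P_k,\claw)$-free graphs by Liu) and proves different, incomparable bounds of $\lceil\frac{k-1}{2}\rceil+3$ and $\lceil\frac{k-1}{3}\rceil+3$ for the restricted classes of $(P_k,\E)$-free and $(P_k,\claw,\butterfly,C_4,C_5)$-free graphs. Your ``third step'' -- showing that the mere presence of an induced $\claw$, $\E$, $\butterfly$, $C_4$, or $C_5$ somewhere in $G$ forces the Gy\'arf\'as path to be shorter, thereby freeing a cop -- is not an argument but a placeholder for exactly the open problem. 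The logic also runs backwards relative to how the partial results actually work: the paper's theorems gain their improvements by assuming the gadgets are absent \emph{everywhere}, which constrains the robber's escape moves off the cops' path (via the flail lemmas); the presence of a gadget at some unrelated location in $G$ gives the cops no leverage at all, and no known mechanism converts ``$G$ contains a claw'' into a saving of even one cop over $k-2$. Relatedly, your step 2 reduction is flawed as stated: from ``$G$ is not $(P_k,\claw)$-free'' you can only conclude $G$ contains an induced claw; it need not contain $\E$, $\butterfly$, $C_4$, or $C_5$, and even if it did, nothing in the paper or in your sketch uses such a containment.

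So what you have written is a research program, not a proof, and its endpoint is an admission that the crux requires ``a genuinely new idea.'' That assessment matches the paper: the authors present Theorems~\ref{Thm: path,claw,butterfly free} and~\ref{Thm: E free} precisely as partial progress (strengthening Liu's result and improving the longest-path bound of Joret, Kami\'nski, and Theis), while explicitly stating that Conjecture~\ref{Conj: main} remains wide open for $k\geq 6$. If you want to salvage something provable from your outline, the realistic targets are the restricted-class statements themselves, where the flail lemmas (Lemmas~\ref{Lemma: Flail claw Lemma}, \ref{Lemma: Flail claw,butterfly Lemma}, \ref{Lemma: E-free flail}) make the ``extra structure helps the cops'' intuition precise -- but note that even there the conclusion is a bound of roughly $k/2$ or $k/3$ plus a constant for those subclasses, not the bound $k-3$ for all $P_k$-free graphs.
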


Significantly for this paper, Liu \cite{liu2019cop} proved that all $(P_k,\claw)$-free graphs have cop number at most $k-3$, thereby satisfying Conjecture~\ref{Conj: main}.
However, proving Sivaraman's conjecture,
even for all $P_5$-free graphs,
was highly non-trivial.
Initially many authors, see \cite{gupta2023cops,liu2019cop,sivaraman2019cop,turcotte2022cops}, solved various subcases of the $P_5$-free case.
The general $P_5$-free conjecture was only recently proven
by
Chudnovsky, Norin, Seymour, and Turcotte in \cite{chudnovsky2024cops}.
Their proof is technical. The key step of which being to prove that every $P_5$-free graph with independence number at least $3$ contains a $3$-vertex induced path with vertices $abc$ in order, such that
every neighbour of $c$ is also adjacent to one of $a, b$.

Less work has been devoted to demonstrating the existence of $P_k$-free graphs with large cop number.
The $4$-cycle is a $P_4$-free graph with cop number $2$, so the Chudnovsky, Norin, Seymour, and Turcotte theorem
is tight.
The Petersen graph is a $P_6$-free graph with cop number $3$, so again Sivaraman's conjecture predicts a tight upper bound.
Interestingly, $C_4$ is the smallest graph with cop number $2$, and the smallest graph with cop number $3$ is the Petersen graph \cite{baird2014}.
The smallest graph with cop number $4$ is the Robertson graph \cite{turcotte2021}, which contains an induced $P_{11}$.
For $k\geq 7$ it is not-obvious that there exists a $P_k$-free graph with cop number $k-3$.

Another problem, proposed by Turcotte in \cite{turcotte2022cops}, was to determine if for $\ell\geq 3$ there exists graphs $G$ with $c(G) = \alpha(G) = \ell$.
The first progress on this problem comes from
Char, Maniya, and Pradhan \cite{char20254} who demonstrated a $16$ vertex graph with cop number and independence number $3$.
Significantly, such a graph is necessarily $P_7$-free.
More recently, Clow and Zaguia \cite{clow2025cops} prove that for all positive integers $\ell$ there is a graph with $c(G) = \alpha(G) = \ell$.
All such graphs are necessarily $P_{2\ell+1}$-free.
Hence, a corollary of Clow and Zaguia's result is that for all $k$, there exists $P_{k}$-free graphs with cop number at least $\lfloor\frac{k-1}{2}\rfloor$.
For all values of $k\geq 5$, 
this result provides a best known example of a graph $P_k$-free graph with large cop number.
Clow and Zaguia's construction uses random graphs of diameter $2$.
Connections between a graph's cop number and diameter has been extensively studied, see \cite{lu2012meyniel,petr2023note,wagner2015cops}.

Our contributions are as follows.
We begin by demonstrating a subclass of $P_k$-free graphs with small cop number.
This class is significant in light of our next result.

\begin{theorem}\label{Thm: path,claw,butterfly free}
    If $G$ is a $(P_k,\claw,\butterfly,C_4,C_5)$-free graph, then 
    \[
    c(G) \leq \Big\lceil \frac{k-1}{3}\Big\rceil+3.
    \] 
\end{theorem}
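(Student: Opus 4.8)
The plan is to adapt the Gy\'arf\'as-path cop strategy used by Sivaraman \cite{sivaraman2019application} to show that $P_k$-free graphs are $(k-2)$-cop-win, and to leverage the forbidden induced subgraphs so that each newly committed cop advances the Gy\'arf\'as path by three vertices instead of one. Concretely, I would maintain the invariant that, after $i$ cops have been committed, the robber is confined to a connected region $R$ which is a component of $G$ with the closed neighbourhood of some induced path $v_1 v_2 \cdots v_t$ deleted, where $t$ grows by about $3$ each time a cop is added and where each $v_j$ is either occupied by a cop or blocked by the already-placed cops. Since $G$ is $P_k$-free, an induced path has at most $k-1$ vertices, so this process must halt after roughly $\lceil \frac{k-1}{3}\rceil$ cops; the remaining additive constant is used to finish once the path can no longer be extended.

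The structural heart is a lemma asserting that $(\claw,\butterfly,C_4,C_5)$-freeness forces the way a connected set attaches to a neighbourhood to be small. Writing $X = N(v_t)\cap N(R)$ for the attachment of the robber's region to the frontier vertex, claw-freeness says $N(v_t)$, and hence $X$, has independence number at most $2$, so $X$ is covered by two cliques; $C_4$-freeness then severely limits how a vertex of $R$ can see both cliques at once, while $C_5$- and $\butterfly$-freeness rule out the remaining configurations in which the attachment could fan out into independent pieces of $R$. I would use this to show that a single well-chosen vertex $w$ (at distance two from $v_t$, inside $R$) has a closed neighbourhood that simultaneously dominates $X$ and the first two layers of $R$ beyond $X$; moving one cop to $w$ then blocks three consecutive vertices of a legal extension $v_t v_{t+1} v_{t+2}(=w)$ of the path and leaves the robber in a component of $R$ three layers deeper, re-establishing the invariant with $t$ increased by about $3$. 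The ``$+3$'' then covers the endgame: when no such three-step extension exists, the robber's region has bounded radius around the tail of the path, and two cops suffice to dominate a radius-one ball (via the two-clique structure of a claw-free neighbourhood) while a third cop performs the capture; the ceiling absorbs the rounding in this last phase.

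I expect the structural lemma to be the main obstacle: showing that the four forbidden subgraphs genuinely force the ``tripling'', that one well-placed closed neighbourhood absorbs three Gy\'arf\'as steps, requires a careful case analysis of how a connected vertex set can hang off the neighbourhood of a vertex without creating a $\claw$, $\butterfly$, $C_4$, or $C_5$, and the cases where the attachment $X$ is not a clique are the delicate ones. A secondary difficulty, routine but not free, is the dynamic bookkeeping common to all such arguments: the robber moves while the new cop travels to $w$, so I would need the standard shadowing maneuvers to install the new cop while the committed cops keep the robber out of the earlier part of the path, and I must check that the updated robber region is connected and again satisfies the hypotheses needed to iterate the lemma.
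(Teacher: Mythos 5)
Your plan has a genuine gap at its structural heart. The entire ``division by three'' in your sketch rests on the claimed lemma that a single well-chosen vertex $w$ at distance two from the frontier vertex $v_t$ has a closed neighbourhood dominating the attachment set $X=N(v_t)\cap N(R)$ \emph{and} the first two layers of $R$ beyond $X$, so that one cop advances the separating path by three vertices. Nothing in $(\claw,\butterfly,C_4,C_5)$-freeness gives this: these conditions constrain only local configurations around a single vertex or a short induced path, and they do not force the second and third layers of the robber's territory to be absorbed by one closed neighbourhood (the clique substitution of any bounded-degree graph, which is $(\claw,\butterfly,C_4,C_5)$-free by Section~4 of the paper, already shows that a vertex at distance two from $v_t$ typically sees only a clique's worth of the next layer). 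Even the preliminary step is shaky: independence number at most $2$ for $N(v_t)$ does not in general yield a cover of $X$ by two cliques, so the case analysis you defer is not just delicate but starts from a false reduction. Your endgame is similarly unsupported: in a claw-free graph two vertices dominate $N[r]$ only when they are two \emph{non-adjacent neighbours} of the robber's vertex $r$, which requires the cops to already be in that specific position, not merely ``near the tail of the path.''

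The paper obtains the factor $\frac13$ by a different mechanism that you would need to recover in some form. There the separating path is not pushed forward three vertices per cop; instead a lead cop stays at distance exactly two behind the robber and extends a Gy\'arf\'as path \emph{through the robber's own previous positions} (one vertex per round), while the follower cops occupy only every third vertex of that path (a $\frac13$-saturation), which is why $\lceil\frac{k-1}{3}\rceil$ of them suffice on a path that $P_k$-freeness keeps shorter than $k$. The forbidden subgraphs enter not to triple an advance but through two ``flail'' lemmas: $C_4$- and $C_5$-freeness force any vertex by which the robber could reattach to the path to be adjacent to two consecutive path vertices, and $\claw$- plus $\butterfly$-freeness force \emph{all} such reattachment vertices to be adjacent to one of those same two consecutive vertices; the sparse cop coverage then puts cops within one step of both, and together with the lead cop (whose position $r_{i-1}$ and the reattachment vertex are two non-adjacent neighbours of $r_i$, hence dominate $N[r_i]$ by claw-freeness) the robber is caught. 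Note also that this strategy is deliberately non-monotone --- the cops abandon their first path once the robber is at distance two --- precisely because the capture argument needs the path to consist of the robber's trail; your monotone-territory invariant has no mechanism replacing this. As written, your proposal identifies the right tools (Gy\'arf\'as path, claw-free domination) but the step that would deliver the bound is missing, and I do not see how to repair it along the lines you describe without essentially proving the paper's flail lemmas and switching to its chase-and-saturate strategy.
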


The choice of $\claw,\butterfly,C_4,C_5$ here is not arbitrary.
These graphs come from the following theorem, whose proof uses an operation appearing in \cite{joret2010cops}.

\begin{theorem}\label{Thm: path to claw reduction}
    If $G$ is a graph whose longest path is length $p$ and $c(G) \geq t$, then there exists a $(P_{2p+1},\claw,\butterfly,C_4,C_5)$-free graph $H$ with $c(H) \geq t$.
\end{theorem}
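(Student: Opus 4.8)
The plan is to construct $H$ from $G$ by a local replacement operation that kills all the forbidden small subgraphs while both controlling the length of the longest path and preserving a lower bound on the cop number. Since $G$ has longest path of length $p$, the operation appearing in \cite{joret2010cops} that I have in mind is to replace each \emph{vertex} of $G$ by a small gadget and each \emph{edge} of $G$ by a path of fixed short length (a subdivision), chosen so that the resulting graph is triangle-free (hence $\butterfly$-free), $C_4$-free and $C_5$-free, and has no induced claw. Subdividing every edge once already destroys all triangles and all $4$-cycles in most constructions; the delicate point is to simultaneously destroy $C_5$'s and claws, which is why the gadget replacing each vertex — rather than a bare subdivision — is needed. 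I would first write down the gadget explicitly (a path or a short tree attached at the ``ports'' corresponding to the incident edges of $v$), verify by a short case analysis that $H$ contains no induced $\claw$, no $\butterfly$, no $C_4$ and no $C_5$, and then bound the longest path: a path in $H$ projects to a walk in $G$, and since the longest path in $G$ has length $p$, together with the bounded diameter of each gadget and the fixed subdivision length, one gets that the longest path in $H$ has length at most $2p+1$ (the factor $2$ coming essentially from one subdivision per edge, and the additive constant absorbing the gadget sizes).

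The second ingredient is the cop-number lower bound $c(H)\ge c(G)\ge t$. Here I would argue via a robber-strategy (or retract/image) argument: $G$ is a \emph{retract}, or at least a ``topological image,'' of $H$, so a winning robber strategy on $G$ lifts to a winning robber strategy on $H$ using no more than the same number of cops. Concretely, I would set up a map $\pi:V(H)\to V(G)$ sending each gadget and each subdivided edge to the corresponding vertex/endpoint of $G$, show that $\pi$ is a graph homomorphism after identifying subdivision-internal vertices appropriately, and then show that the cop player's moves in $H$ project to legal cop moves in $G$ while the robber, living on the gadget vertices corresponding to his position in $G$, can always mimic his $G$-strategy and only needs to react once every constant number of $H$-rounds (slowing down time by the subdivision factor). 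The standard fact that subdividing every edge the same number of times does not decrease the cop number (indeed for a single subdivision it is classical, cf.\ the remarks in \cite{joret2010cops,bonato2011game}) is the template; the gadgets must be designed so that this monotonicity survives their insertion, e.g.\ by making each gadget a tree (cop number $1$) attached so that a single cop can never exploit it.

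The main obstacle I anticipate is the simultaneous satisfaction of all four forbidden-subgraph conditions \emph{together with} the $P_{2p+1}$ bound and the cop-number preservation: making the gadget rich enough to destroy induced claws (which typically requires some vertices to be pairwise \emph{non}-adjacent at the ports, forcing care that no new $C_4$ or $C_5$ is created through two ports of the same gadget or through a short cycle in $G$) while keeping it small enough that the longest path only doubles and that the retraction onto $G$ goes through. I would handle this by choosing the subdivision length and gadget uniformly (the same for every vertex and every edge), proving the four structural claims as separate lemmas with explicit short case checks, and then proving the length bound and the cop-number bound as two final lemmas; assembling these gives $H$ with the stated properties and completes the proof. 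A secondary, more bookkeeping-type obstacle is getting the exact constants so that the path bound reads \emph{exactly} $2p+1$ rather than $2p+O(1)$, which may force the gadget to be as small as a single extra vertex or even just the bare single subdivision, in which case destroying $C_5$ needs $G$ itself to be handled — I would resolve this by first passing to the subdivision $G'$ of $G$ (which is already triangle-, $C_4$-, $C_5$- and claw-free when $G$ has no very short structure) and checking that the only problematic cases come from short cycles of $G$, which a one-vertex gadget at each original vertex suffices to neutralize.
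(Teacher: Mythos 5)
Your proposal is a plan rather than a proof, and the plan has a genuine gap at its core: the gadget is never specified, and the candidates you entertain cannot work. A bare subdivision of every edge does not destroy claws --- it creates an induced $\claw$ centered at every original vertex of degree at least $3$ --- so your fallback remark that the subdivision $G'$ ``is already claw-free when $G$ has no very short structure'' is false. Conversely, any vertex gadget rich enough to kill claws at the ports (which essentially forces the ports to be pairwise adjacent, i.e.\ a clique) cannot coexist with edge subdivisions if you also want the longest induced path to stay below $2p+1$: a path alternating through gadgets and subdivided edges picks up strictly more than two vertices per original vertex, so the bound degrades to a larger multiple of $p$. You noticed this tension yourself but did not resolve it. The second gap is the cop-number inequality: the ``classical'' monotonicity under uniform subdivision that you invoke is not a theorem you can cite in this form, and the retract argument does not apply as stated because $G$ is not an (induced) subgraph of the graphs you propose, so there is no retraction of $H$ onto $G$ to speak of; lifting a robber strategy through gadgets and subdivisions would have to be argued from scratch and is exactly the nontrivial part.

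The paper resolves both issues with a single concrete operation, the clique substitution already defined in Section~2: each vertex $v$ is replaced by a clique $K^v$ with one vertex $(v,u)$ per neighbour $u$, and $(v,u)(u,v)$ is an edge exactly when $uv\in E(G)$ (no subdivision at all). Then every vertex of $H$ has neighbourhood equal to a clique of size $\deg(v)-1$ plus one isolated extra vertex, which immediately rules out $\claw$ and $\butterfly$ (both have a universal vertex whose neighbourhood is not of this form); a two-colouring of the edges of $H$ into intra-clique and inter-clique edges, with each vertex on exactly one inter-clique edge, rules out $C_4$ and $C_5$; an induced path meets each clique $K^v$ in at most two vertices and its inter-clique edges project to a path of $G$, giving $P_{2p+1}$-freeness; and $c(G)\le c(H)$ is not reproved but cited as Lemma~2.2 of Joret, Kami\'nski, and Theis. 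If you want to salvage your outline, you should replace the subdivision-plus-gadget scheme by this construction and either cite or reprove that cop-number lemma, since that inequality is precisely where a hand-wave will not do.
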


Together, Theorem~\ref{Thm: path,claw,butterfly free} and Theorem~\ref{Thm: path to claw reduction} imply the following result.
In \cite{joret2010cops} it was shown that if a graph $G$ has no cycle of length $p$, 
then $G$ has cop number at most $\frac{p}{2}$. 

\begin{theorem}\label{Thm: main longest path}
   If $G$ is a graph whose longest path is length $p$, then $c(G) \leq \lceil\frac{2p}{3}\rceil+3$. 
\end{theorem}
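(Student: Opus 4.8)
The plan is to derive this as an immediate consequence of the two preceding theorems, so the only real work is bookkeeping with the ceiling functions. Let $G$ be a graph whose longest path has length $p$, and suppose toward a contradiction that $c(G) \geq t$ where $t = \lceil \frac{2p}{3}\rceil + 4$, i.e. $c(G)$ exceeds the claimed bound. By Theorem~\ref{Thm: path to claw reduction}, there exists a $(P_{2p+1}, \claw, \butterfly, C_4, C_5)$-free graph $H$ with $c(H) \geq t$. Now apply Theorem~\ref{Thm: path,claw,butterfly free} with $k = 2p+1$: since $H$ is $(P_k, \claw, \butterfly, C_4, C_5)$-free, we get $c(H) \leq \lceil \frac{k-1}{3}\rceil + 3 = \lceil \frac{2p}{3}\rceil + 3$. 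This contradicts $c(H) \geq t = \lceil \frac{2p}{3}\rceil + 4$, so no such $t$ exists and $c(G) \leq \lceil \frac{2p}{3}\rceil + 3$.

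More directly, without the contradiction framing: combining the two theorems, for any graph $G$ with longest path of length $p$ we obtain a $(P_{2p+1}, \claw, \butterfly, C_4, C_5)$-free graph $H$ with $c(H) \geq c(G)$, and then $c(G) \leq c(H) \leq \lceil \frac{(2p+1)-1}{3}\rceil + 3 = \lceil \frac{2p}{3}\rceil + 3$. The one point that needs a line of justification is that Theorem~\ref{Thm: path to claw reduction} is stated as ``if $c(G) \geq t$ then there is an $H$ with $c(H) \geq t$''; taking $t = c(G)$ itself gives the witness graph $H$, and the forbidden-subgraph hypothesis of Theorem~\ref{Thm: path,claw,butterfly free} is met with $k = 2p+1$. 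I should double-check the degenerate cases: if $p$ is small (say $p \leq 2$, so $G$ is a path or has very limited structure) the bound $\lceil \frac{2p}{3}\rceil + 3 \geq 3$ holds trivially since any such $G$ is cop-win or has cop number at most a small constant, and connectivity is assumed throughout.

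I expect there to be essentially no obstacle here — this theorem is the ``payoff'' corollary and its proof is a two-line deduction. If anything, the only thing to be careful about is the arithmetic identity $\lceil \frac{(2p+1)-1}{3}\rceil = \lceil \frac{2p}{3}\rceil$, which is immediate, and making sure the statement of Theorem~\ref{Thm: path to claw reduction} is invoked with the correct path length ($P_{2p+1}$, matching $k-1 = 2p$). I would present the proof in three or four sentences and move on.
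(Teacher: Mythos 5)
Your proposal is correct and matches the paper exactly: the paper derives Theorem~\ref{Thm: main longest path} as an immediate corollary of Theorem~\ref{Thm: path to claw reduction} (applied with $t = c(G)$) and Theorem~\ref{Thm: path,claw,butterfly free} (applied with $k = 2p+1$), which is precisely your two-line deduction, ceiling arithmetic included.
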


This theorem should be understood in the context of the 
weak Meyniel conjecture, 
which states that there exists an $\epsilon>0$ such that all $n$ vertex graphs,  $G$,
have $c(G) = O(n^{1-\epsilon})$.
Trivially, $p$ is at most $n$ in an $n$-vertex graph. If the weak Meyniel conjecture
were to be false, then there exist graphs whose longest path is length $p$ with cop number $c(G) = \Omega(p^{1-o(1)})$.
Do such graphs exist?

The final result we prove 
generalizes and strengthens Liu's result that $(P_k,\claw)$-free graphs
have cop number at most $k-3$.
Notice that every $\claw$-free graph is $\E$-free, since the $\claw$ is an induced subgraph of $\E$.

\begin{theorem}\label{Thm: E free}
    If $G$ is a $(P_k,\E \,)$-free graph, then 
    \[
    c(G) \leq \Big\lceil \frac{k-1}{2}\Big\rceil+3.
    \] 
\end{theorem}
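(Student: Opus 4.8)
The plan is to reduce the $(P_k, \E\,)$-free case to the $(P_{k'},\claw,\butterfly,C_4,C_5)$-free case already handled by Theorem~\ref{Thm: path,claw,butterfly free}, by a ``localisation'' argument in the spirit of the Gy\'arf\'as path / robber-territory method. If $G$ is $\claw$-free, then $G$ is $(P_k,\claw)$-free and Liu's bound (or directly Theorem~\ref{Thm: path,claw,butterfly free} applied after noting $C_4,C_5$ can be dealt with separately) already gives what we want; so the interesting case is when $G$ contains an induced $\claw$ but no induced $\E$. The point of forbidding $\E$ is structural: an induced claw with center $v$ forces strong adjacency constraints on vertices at distance $2$ from $v$, because a claw together with two ``escaping'' paths of length $2$ from two of its leaves would create an induced $\E$. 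I would first prove a lemma isolating exactly this: in a $(P_k,\E\,)$-free graph, if $vab_1,vab_2$ \dots\ more precisely, if $v$ is the center of an induced claw with leaves $x,y,z$, then the second neighbourhoods ``behind'' distinct leaves cannot both branch out, which lets a small number of cops (a bounded additive constant) guard the claw and its immediate surroundings.

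The main engine should be a Gy\'arf\'as-style path argument adapted to track \emph{both} the length of an induced path and the number of ``independent claw directions'' encountered. Concretely, I would run the standard cop strategy of maintaining a shortest path $Q$ that the robber may not cross, shadowing it with cops, and repeatedly extending/retracting $Q$ into the robber's current component. In the $\claw$-free world this costs one cop per step of $Q$ and the path has length $<k$, giving $k-3$; here the improvement to $\lceil \frac{k-1}{2}\rceil + 3$ must come from showing that the relevant ``path'' the cops need to guard has length only about $k/2$, because $\E$-freeness forces a kind of local near-linearity: one cannot have an induced $P_k$ whose vertices also repeatedly serve as claw-centers with fresh private neighbours, since two such configurations two apart along the path again yield an induced $\E$. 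So the counting lemma I would aim for is: \emph{in a $(P_k,\E\,)$-free graph, any configuration the cop strategy is forced to guard has an induced path of length at most $\lceil\frac{k-1}{2}\rceil$ within it}, plus a constant number of extra cops ($+3$) to handle the bounded local obstructions ($C_4$, $C_5$, butterfly-like overlaps) exactly as in Theorem~\ref{Thm: path,claw,butterfly free}. Indeed the cleanest route may be to directly show that the robber's territory in a $(P_k,\E\,)$-free graph, after the cops have set up a guarded shortest path, is itself $(P_{k'},\claw,\butterfly,C_4,C_5)$-free for $k' \approx \lceil\frac{2(k-1)}{3}\rceil$ or smaller, and then quote Theorem~\ref{Thm: path,claw,butterfly free} verbatim; one has to check the arithmetic so that $\lceil (k'-1)/3\rceil + 3 \le \lceil (k-1)/2\rceil + 3$.

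I expect the main obstacle to be the structural lemma turning $\E$-freeness into a quantitative statement about induced paths: specifically, showing that an induced $P_m$ in a $G$ that is $\E$-free but claw-rich forces $m$ to be small relative to $k$, i.e.\ that ``long induced path $+$ many claw centers along it'' is incompatible with $\E$-freeness. The delicate point is that a claw center on the path has its two ``off-path'' leaves, and one must argue that the two length-$2$ off-path excursions from two claw centers placed a bounded distance apart along the path, together with the path segment between them, contain an induced $\E$ — which requires carefully ruling out chords. Handling these chords (and the resulting short cycles $C_4,C_5$ and butterflies) is precisely why the statement carries the ``$+3$'' slack and why the $C_4,C_5,\butterfly$ hypotheses were built into Theorem~\ref{Thm: path,claw,butterfly free}: those small obstructions get absorbed either by an additive constant of cops or by the localisation. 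Once that lemma is in hand, the cop strategy itself (guard a shortest path, shrink the robber's component, recurse on a cleaner subclass) is routine, and the bookkeeping to land on $\lceil\frac{k-1}{2}\rceil+3$ is a short induction on $k$.
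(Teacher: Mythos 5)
Your proposal has a genuine gap at its core: the reduction you lean on is unjustified and almost certainly false. You want to argue that, after the cops set up a guarded path, the robber's territory in a $(P_k,\E\,)$-free graph is $(P_{k'},\claw,\butterfly,C_4,C_5)$-free for some $k'$ roughly $\tfrac{2k}{3}$ or smaller, and then quote Theorem~\ref{Thm: path,claw,butterfly free}. But the robber's territory is just an induced subgraph of $G$, and an $\E$-free graph can be saturated with induced claws, butterflies, $C_4$'s and $C_5$'s (indeed $C_4$, $C_5$, the claw and the butterfly are each themselves $\E$-free); passing to an induced subgraph cannot create claw-freeness, and nothing in your sketch explains why guarding a shortest path would. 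Likewise the claimed halving of the longest induced path inside the territory is never argued. The auxiliary ``counting lemma'' you say you would aim for (long induced path plus many claw centers forces an induced $\E$) is only gestured at: the precise statement, the chord analysis you yourself flag as delicate, and the way it would feed into a cop strategy are all missing, so the proposal does not reduce to anything provable as written. A smaller but still real problem is your opening dispatch of the claw-free case: Liu's bound is $k-3$, which does not imply $\lceil\frac{k-1}{2}\rceil+3$ for large $k$, and Theorem~\ref{Thm: path,claw,butterfly free} cannot be applied to claw-free graphs without the $C_4,C_5,\butterfly$ hypotheses, which you do not show how to remove.

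For comparison, the paper does not reduce to Theorem~\ref{Thm: path,claw,butterfly free} at all. It proves a single precise structural statement about flails (Lemma~\ref{Lemma: E-free flail}): in an $\E$-free graph, a vertex $v_j$ hanging off the end of a long induced path and adjacent to an interior vertex $u_i$ (with $3\le i\le k-3$ and no edge to $u_k$) must also be adjacent to one of $u_{i-1},u_{i+1},u_{i-2}$. The cop strategy is then the same Gy\'arf\'as-path chase as in Theorem~\ref{Thm: path,claw,butterfly free}, but with cops stationed on every \emph{other} vertex of the path (spacing $2$ instead of $3$), plus the constant number of auxiliary cops; the flail lemma shows that the moment the robber becomes adjacent to the guarded path, minimality of its earliest neighbour forces it to be adjacent to a cop-occupied vertex, and $P_k$-freeness bounds how long the chase can last. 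If you want to salvage your approach, the piece to develop is exactly such a quantitative, checkable lemma about which attachments to an induced path are compatible with $\E$-freeness, and then to use it directly inside the cop strategy rather than through a territory-class reduction.
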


The rest of the paper is structured as follows.
In Section~2 we define some terms that we introduce, 
and we recall the definition of clique substitution operation used in \cite{joret2010cops}.
Next, in Section~3 we prove Theorem~\ref{Thm: path,claw,butterfly free}.
The focus on Section~4 is to prove Theorem~\ref{Thm: path to claw reduction}, 
thereby completing the proof of Theorem~\ref{Thm: main longest path}.
In Section~5 we prove Theorem~\ref{Thm: E free}.
We conclude in Section~6 with a discussion of future work.

\section{Preliminaries}

In this section we define terms relevant for the rest of the paper.

For positive integers $k$ and $t$, and a set $S \subseteq [k] \times [t]$,
a graph $G$ is a $(k,t,S)$-flail 
if $G$ consists of an induced path $P:u_1u_2\dots u_{k+1}$ on $(k+1)$-vertices 
and $t$ vertices $v_1,\dots, v_t$ in $N(u_{k+1})$
such that $\{u_1,\dots, u_{k+1}\} \cap \{v_1,\dots, v_t\} = \emptyset$.
From here $(i,j) \in S$ if and only if $u_i$ and $v_j$ are adjacent.
See Figure~\ref{fig: FlailExample} for an example of a flail.
The edges between vertices in $v_1,\ldots,v_t$ are not specified.

\begin{figure}[h!]
    \centering
    \includegraphics[scale = 1.25]{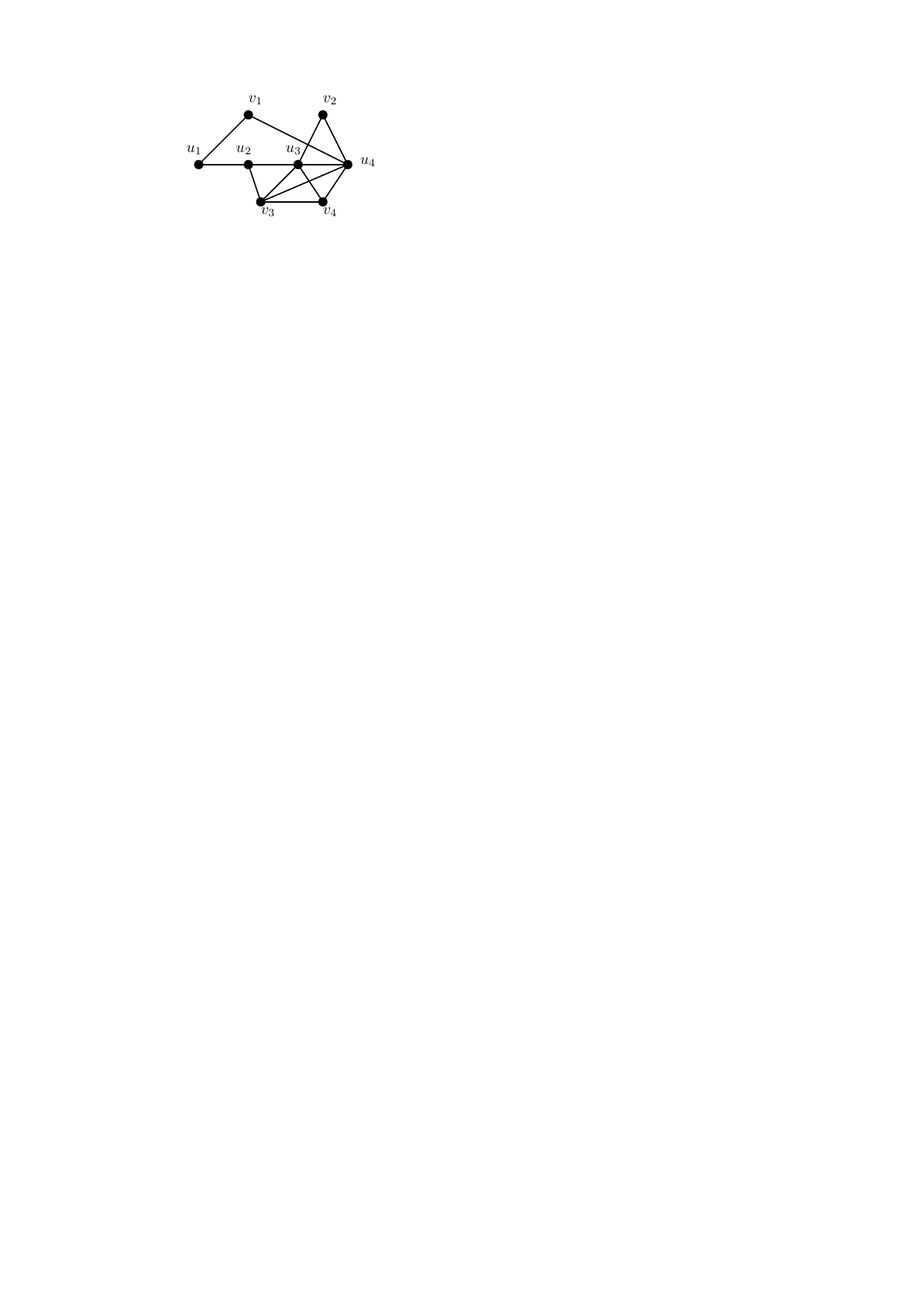}
    \caption{An example of a $(3,4,S)$-flail where $S = \{(1,1),(2,3),(3,2),(3,3),(3,4)\}$.}
    \label{fig: FlailExample}
\end{figure}

Let $P: u_1\dots u_k$ be a path and $X$ be a subset of the vertices $\{u_1,\dots, u_k\}$.
We say that $X$ $\frac{1}{3}$-saturates $P$ if for all $i$, 
$X \cap \{u_i,u_{i+1}\} \neq \emptyset$ or $u_{i-1},u_{i+2}\in X$.
Notice the second clause cannot be fulfilled if $i = 1$, so $X$ being $\frac{1}{3}$-saturating
implies $u_1$ or $u_2$ is in $X$. Intuitively, every consecutive set of three vertices has at least one vertex in $X$, and either $u_1$ or $u_2$ is also in $X$.

Next, we introduce the operation of clique substitution used in \cite{joret2010cops}.
Let $G = (V,E)$ be a graph, the clique substitution $H$ of $G$ is defined as follows.
For all $v \in V$, let $K^v$ be a clique in $H$ with vertices $\{(v,u): u \in N(v)\}$.
Hence, for all vertices $u\neq v$, $K^v$ and $K^u$ are vertex disjoint.
From here we complete the definition of $H$, by adding the edge
$(v,u)(u,v)$  between cliques $K^u$ and $K^v$ in $H$ if $uv \in E$.
See Figure~\ref{fig: clique substitutionExample} for an example of a clique substitution.

\begin{figure}[h!]
    \centering
    \includegraphics[scale = 1.00]{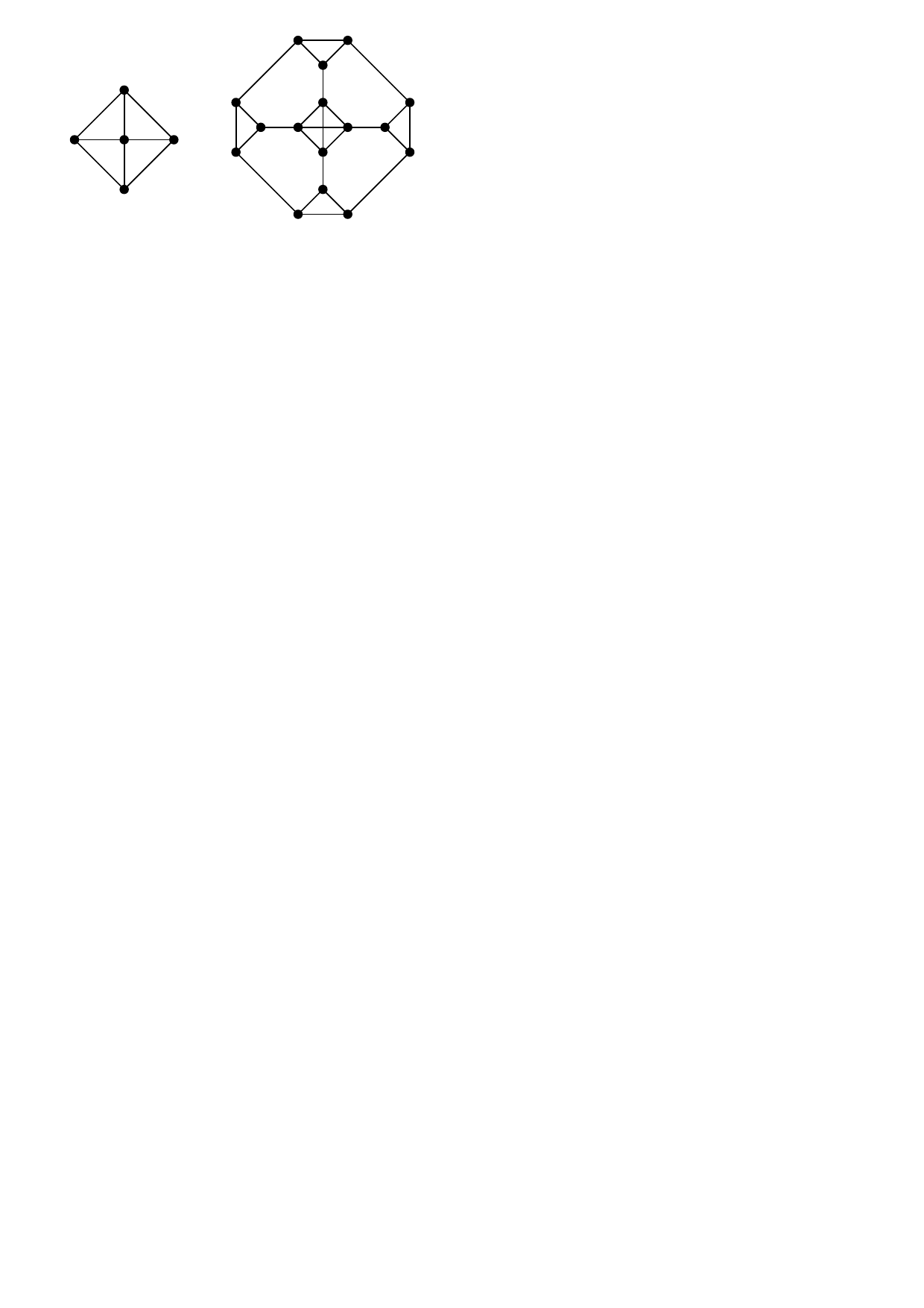}
    \caption{The $4$-wheel is drawn on the left and the clique substitution of the $4$-wheel is drawn on the right.}
    \label{fig: clique substitutionExample}
\end{figure}

\section{$(P_k,\claw,\butterfly,C_4,C_5)$-free Graphs}

In this section we describe a strategy by which $\lceil \frac{k-1}{3}\rceil+3$ cops can capture the robber
on a $(P_k,\claw,\butterfly,C_4,C_5)$-free graph.
In doing so, we will show that in any $(P_k,C_4,C_5)$-free graph,
$\lceil \frac{k-1}{3}\rceil+3$ cops have a strategy which ensures the robber's only winning move relies on the existence of certain induced subgraphs.
Then, we use the fact that our graphs are $(\claw,\butterfly)$-free to restrict the set of these subgraph that can exist.
This limits the robber sufficiently for the cops to capture.

To start we provide some lemmas regarding which $(k,t,S)$-flails may be induced subgraphs of $(\claw,\butterfly)$-free graphs.

\begin{lemma}\label{Lemma: Flail claw Lemma}
    Let $G$ be a $\claw$-free graph and $H$ an induced subgraph of $G$.
    If $H$ is a $(k,t,S)$-flail
    such that $k\geq 3$, then for all $(i,j) \in S$ where $1< i < k$, $(i-1,j)\in S$ or $(i+1,j) \in S$.
\end{lemma}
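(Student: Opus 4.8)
The plan is a short proof by contradiction that exhibits a forbidden claw centered at $u_i$. Suppose, toward a contradiction, that $(i,j)\in S$ for some $i$ with $1<i<k$, yet $(i-1,j)\notin S$ and $(i+1,j)\notin S$. The first step is bookkeeping: because $1<i<k$, the indices $i-1,i,i+1$ all lie in $\{1,\dots,k+1\}$, so $u_{i-1},u_i,u_{i+1}$ are three pairwise distinct vertices of the spine path $P$; since $P$ is induced in $H$ and $H$ is induced in $G$, we get $u_{i-1}u_i,u_iu_{i+1}\in E(G)$ while $u_{i-1}u_{i+1}\notin E(G)$, the latter because these two vertices are non-consecutive on the induced path $P$. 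The flail definition also guarantees $v_j\notin\{u_1,\dots,u_{k+1}\}$, hence $v_j$ is distinct from each of $u_{i-1},u_i,u_{i+1}$.

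The second step translates the membership data into adjacencies: $(i,j)\in S$ says $u_iv_j\in E(G)$, while $(i-1,j)\notin S$ and $(i+1,j)\notin S$ say $u_{i-1}v_j\notin E(G)$ and $u_{i+1}v_j\notin E(G)$. Combining the two steps, the four vertices $u_i,u_{i-1},u_{i+1},v_j$ induce a $K_{1,3}$ in $G$ with center $u_i$; note that the unspecified edges among the $v_\ell$ play no role, since only $v_j$ appears. This contradicts $G$ being $\claw$-free, so $(i-1,j)\in S$ or $(i+1,j)\in S$.

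I do not expect a substantial obstacle here: the statement is essentially the remark that in a claw-free graph no interior vertex of an induced path can carry an off-path neighbour attached at that vertex alone. The one point worth spelling out in the write-up is why the hypothesis is $1<i<k$ and not $1\le i\le k$: at $i=1$ there is no spine-neighbour $u_{i-1}$, and at $i=k$ the vertex $u_{i+1}=u_{k+1}$ is adjacent to every $v_\ell$ by definition while the index $i+1=k+1$ is not in $[k]$, so $(i+1,j)$ is not even a legal element of $S$; in both extreme cases the argument (and the statement) degenerate. Everything else follows directly from the definitions of flail, induced subgraph, and $\claw$-freeness.
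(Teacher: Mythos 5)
Your proof is correct and follows essentially the same route as the paper: assume both $(i-1,j)$ and $(i+1,j)$ are absent from $S$ and observe that $\{u_{i-1},u_i,u_{i+1},v_j\}$ then induces a claw centered at $u_i$, contradicting claw-freeness. The extra bookkeeping about distinctness of vertices and the boundary cases $i=1$, $i=k$ is fine but not a departure from the paper's argument.
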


\begin{proof}
    Let $G$ be a $\claw$-free graph and $H$ an induced subgraph of $G$.
    For contradiction suppose $k\geq 3$ and $1 < i < k$, such that 
    $(i,j) \in S$, while $(i-1,j)\notin S$ and $(i+1,j) \notin S$.
    Then $u_{i-1}u_i,u_iu_{i+1},u_iv_j$ are all edges of $H$. 
    Meanwhile $u_{i-1}u_{i+1}$ is a non-edge in $H$, since $u_1\dots u_{k+1}$ induces a path.
    By our assumption $(i-1,j)\notin S$ and $(i+1,j) \notin S$, we note $u_{i-1}v_j$ and $u_{i+1}v_j$ 
    are also non-edges.
    Thus, $\{u_{i-1},u_i,u_{i+1},v_j\}$ induces a $\claw$ in $H$.
    Since $H$ is an induced subgraph of $G$, $\{u_{i-1},u_i,u_{i+1},v_j\}$ induces a $\claw$ in $G$.
    This contradicts $G$ being $\claw$-free, thereby completing the proof.
\end{proof}

\begin{figure}[h!]
    \centering
    \includegraphics[scale = 1.05]{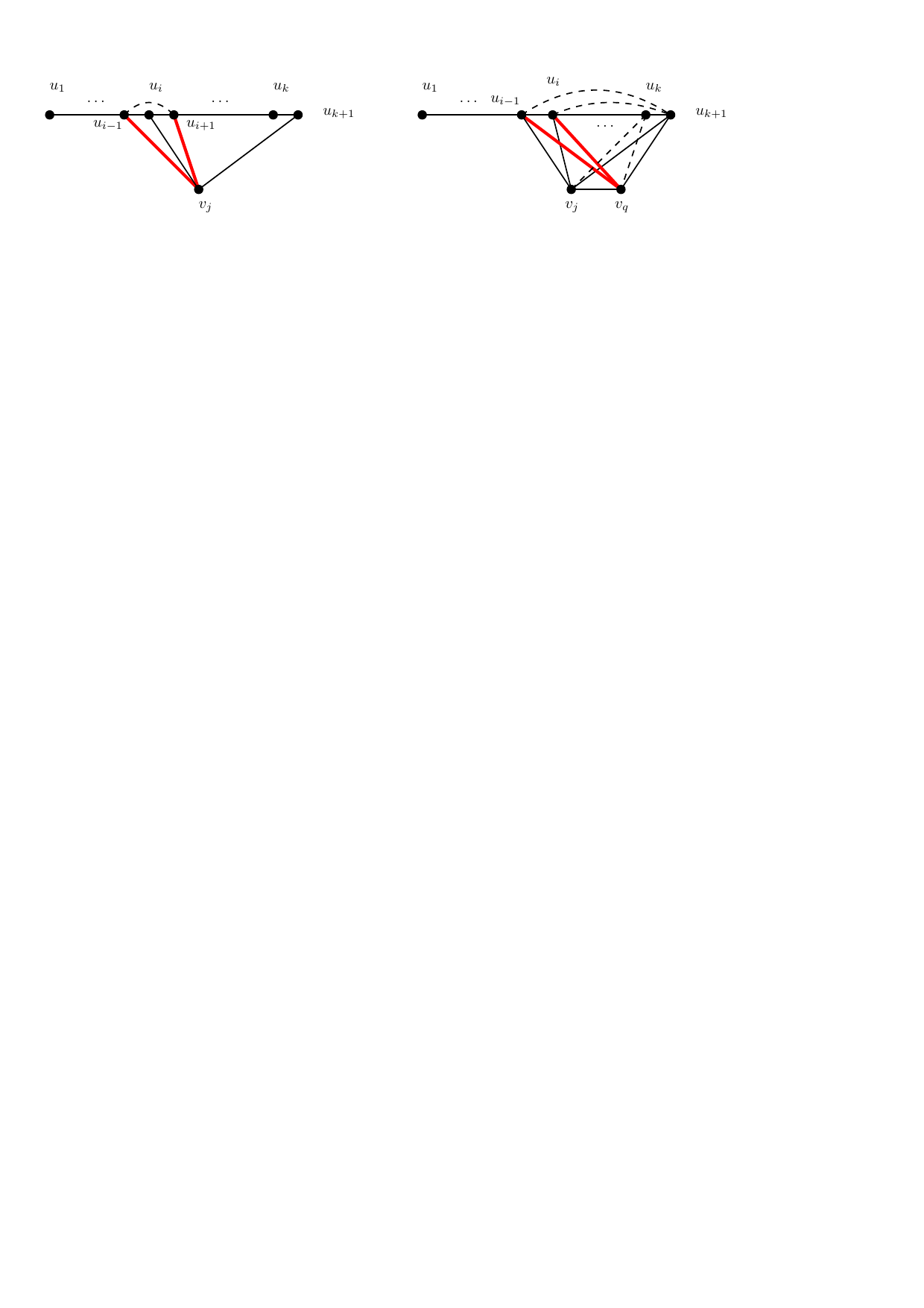}
    \caption{
    The left figure depicts the flail discussed in Lemma~\ref{Lemma: Flail claw Lemma}, while the right figure depicts the flail discussed in Lemma~\ref{Lemma: Flail claw,butterfly Lemma}. Both lemmas claim at least one of a certain set of edges must exist. In both figures these edges are drawn as red and bold.}
    \label{fig:FlailLemmas}
\end{figure}

\begin{lemma}\label{Lemma: Flail claw,butterfly Lemma}
    Let $G$ be a $(\claw,\butterfly)$-free graph and $H$ an induced subgraph of $G$.
    If $H$ is a $(k,t,S)$-flail
    such that $k\geq 3$, and
    \[\{ (k,j): j \in [t]\} \cap S = \emptyset,\]
    and $(i,j),(i-1,j)\in S$ for some $i$ and $j$, then for all $q \in [t]$, $(i,q)\in S$ or $(i-1,q)\in S$.
\end{lemma}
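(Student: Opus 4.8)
The plan is a short proof by contradiction in two moves, in the same spirit as Lemma~\ref{Lemma: Flail claw Lemma}. Fix $i,j$ with $(i,j),(i-1,j)\in S$ and fix an arbitrary $q\in[t]$, and suppose toward a contradiction that $(i,q)\notin S$ and $(i-1,q)\notin S$. First I would clear away the degenerate case $q=j$: then $(i,q)=(i,j)\in S$, so the conclusion holds and there is nothing to prove; hence assume $q\neq j$. I would also record two consequences of the hypothesis $\{(k,j'):j'\in[t]\}\cap S=\emptyset$ that get used repeatedly: since $(i,j)\in S$ we must have $i\leq k-1$, so (because $P$ is induced and $k\geq 3$) neither $u_{i-1}$ nor $u_i$ is adjacent to $u_{k+1}$; and neither $v_j$ nor $v_q$ is adjacent to $u_k$.

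\textbf{Step 1: show $v_j\sim v_q$, using $\claw$-freeness.} Look at the four distinct vertices $u_{k+1},u_k,v_j,v_q$. Here $u_{k+1}$ is adjacent to $u_k$ (path edge) and to both $v_j$ and $v_q$ (flail definition), while $u_k$ is adjacent to neither $v_j$ nor $v_q$ (as $(k,j),(k,q)\notin S$). If in addition $v_j\not\sim v_q$, then $\{u_{k+1},u_k,v_j,v_q\}$ would induce a $\claw$ in $H$, hence in $G$, contradicting $\claw$-freeness. So $v_j\sim v_q$.

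\textbf{Step 2: exhibit a $\butterfly$, contradicting $(\claw,\butterfly)$-freeness.} I would then check that $X=\{u_{i-1},u_i,v_j,v_q,u_{k+1}\}$ induces a $\butterfly$. The edges among $X$ are exactly $u_{i-1}u_i$ (path), $u_{i-1}v_j$ and $u_iv_j$ (since $(i-1,j),(i,j)\in S$), $v_jv_q$ (Step 1), and $v_ju_{k+1}$, $v_qu_{k+1}$ (flail); the four remaining pairs $u_{i-1}v_q$, $u_iv_q$ are non-edges by the contradiction hypothesis, and $u_{i-1}u_{k+1}$, $u_iu_{k+1}$ are non-edges because $i\leq k-1$ and $P$ is induced. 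This is precisely two triangles, $\{u_{i-1},u_i,v_j\}$ and $\{v_j,v_q,u_{k+1}\}$, meeting only at $v_j$, i.e. an induced $\butterfly$ in $H$ and therefore in $G$ — a contradiction. Hence $(i,q)\in S$ or $(i-1,q)\in S$, as claimed.

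I do not expect a genuine obstacle: once one spots that the right objects to forbid are the claw centered at $u_{k+1}$ on $\{u_k,v_j,v_q\}$ and the two-triangle configuration at $v_j$, the argument is bookkeeping. The only points requiring care are verifying that all listed vertex sets are genuinely of the claimed size (which is why the case $q=j$ is peeled off first) and confirming that $u_{k+1}$ is non-adjacent to both $u_{i-1}$ and $u_i$ — this is exactly where the hypotheses $k\geq 3$ and $\{(k,j'):j'\in[t]\}\cap S=\emptyset$ are essential, since without the latter $v_j$ could be adjacent to $u_k$ and the path would reach $u_{k+1}$ from $u_i$ in too few steps.
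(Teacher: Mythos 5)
Your proof is correct and follows essentially the same route as the paper: use claw-freeness at $u_{k+1}$ (with $u_k$ non-adjacent to the $v$'s) to force $v_j v_q$ to be an edge, then exhibit the butterfly on $\{u_{i-1},u_i,v_j,v_q,u_{k+1}\}$ with the two triangles meeting at $v_j$. The only cosmetic difference is that the paper first argues all of $\{v_1,\dots,v_t\}$ is a clique, whereas you extract only the single needed edge $v_jv_q$, which is the same claw argument specialized to two vertices.
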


\begin{proof}
    Let $k\geq 3$, 
    let $G$ be a $(\claw,\butterfly)$-free graph, and let $H$ an induced $(k,t,S)$-flail in $G$.
    For contradiction suppose
    \[\{ (k,j): j \in [t]\} \cap S = \emptyset,\]
    and $(i,j),(i-1,j)\in S$ for some $i$ and $j$ but there exists a $q \in [t]$ such that $(i,q),(i-1,q)\notin S$.
    Then $q\neq j$ implying $t \geq 2$.

    Since $G$ is $\claw$-free, $H$ is $\claw$-free. 
    Hence, $\{ (k,j): j \in [t]\} \cap S = \emptyset$ implies that 
    vertices $\{v_1,\dots, v_t\}$ induce a clique.
    Otherwise, since $t\geq 2$, there are distinct and non-adjacent vertices $v_a,v_b$, 
    implying the vertices $\{u_k,u_{k+1},v_a,v_b\}$ induce a $\claw$, which is a contradiction.
    Suppose then that
    vertices $\{v_1,\dots, v_t\}$ induce a clique.

    Since
    vertices $\{v_1,\dots, v_t\}$ induce a clique, vertices $v_q$ and $v_j$ are adjacent.
    Trivially, $v_q$ and $v_j$ are both adjacent to $u_{k+1}$.
    By our assumption that $(i,q),(i-1,q)\notin S$, $v_q$ is not adjacent to $u_i$ or $u_{i-1}$.
    Given we assumed  $(i,j),(i-1,j)\in S$, we note that $v_j$ is adjacent to $u_i$ and $u_{i-1}$.
    Finally, we note that since $u_1\dots u_{k+1}$ is an induced path by the definition of being a $(k,t,S)$-flail,
    vertices $u_i$ and $u_{i-1}$ are adjacent, while $i < k$ since $(k,j)\notin S$, implies that both $u_i$ and $u_{i-1}$ are non-adjacent to $u_{k+1}$.
    But this implies vertices $\{u_{i-1},u_i,u_{k+1},v_j,v_q\}$ induces a $\butterfly$.
    Since $H$ is an induced subgraph of $G$, which is $\butterfly$-free, this is a contradiction.
    Thereby completing the proof.
\end{proof}

We are now prepared to prove Theorem~\ref{Thm: path,claw,butterfly free}.
We will employ the Gy\'arf\'as path argument in a similar manner to Sivaraman in \cite{sivaraman2019application}.

\begin{proof}[Proof of Theorem~\ref{Thm: path,claw,butterfly free}]

Let $G$ be a $(P_k,\claw,\butterfly,C_4,C_5)$-free graph.
We will show how  $\lceil\frac{k-1}{3}\rceil+3$ cops can capture the robber, no matter how the robber plays.
The cops begin the game with all $\lceil\frac{k-1}{3}\rceil+3$ cops on a fixed but arbitrary vertex $w_0$.
Let $v_0$ denote the starting position of the robber.
Label the cops $C^\uparrow,C^\downarrow, C^\Downarrow, C^0,\dots, C^{\lceil\frac{k-1}{3}\rceil-1}$.
We denote the robber by $R$.

If $\dist(w_0,v_0) = 1$, then the cops capture on their first turn.
Assume without loss of generality that the robber never deliberately moves adjacent to a cop, as this is losing for the robber.
Then $\dist(w_0,v_0)\geq 2$.
If $\dist(w_0,v_0) = 2$, then proceed to Step~2 of the cops' strategy.
Otherwise, $\dist(w_0,v_0) > 2$ in which case proceed to Step~1 of the cops' strategy.

\vspace{0.25cm}
\noindent\underline{Step.1:}  We suppose $\dist(w_0,v_0) > 2$, all cops are on $w_0$, the robber is on $v_0$, and it is the cops turn to move.
\vspace{0.25cm}

We aim to show that the cops can ensure that after finitely many turns, the robber is at distance at most $2$ from some cop on the cops' turn to move.
For contradiction suppose the robber has and is using a strategy which ensures they remain at distance at least $3$ from the cops before the cops move.

The cops proceed as follows.
For $i\geq 0$ we let $v_i$ denote the position of robber before the cops move in turn $i+1$, 
and let $w_i$ be the location of the cop $C^0$ before the cops move in turn $i+1$.
Hence, $w_0w_1\dots w_i$ is a walk for all $i\geq 0$.
Cops $C^\downarrow$ and $C^{\Downarrow}$ never leave $w_0$, 
while cop $C^\uparrow$ always moves with cop $C^0$, that is $C^\uparrow$ always moves to $w_i$ on turn $i$. 
Hence, before the cops move on turn $i+1$ there are at least two cops on $w_0$ and at least two cops on $w_i$.
Let $x$ be an arbitrary positive integer, we let $w_{-x} = w_0$.
Then for all $j>0$ we suppose that the cop $C^j$ occupies vertex
$w_{i-3j}$ prior to the cops move in turn $i+1$. 
That is, initially all cops remain on $w_0$,
and as the distance from $w_0$ to $w_i$ increases, another cop leaves $w_0$ to follow the walk taken by $C^0$.
This is possible since $w_0w_1\dots w_i$ is a walk and all cops begin on $w_0$.
Let 
\[
D_i = \min\Big(\Big\{\dist(w_0,v_i)\Big\} \cup  \Big\{\dist(w_{i-3j},v_i): 0 \leq j \leq \Big\lceil\frac{k-1}{3} \Big\rceil-1\Big\} \Big).
\]
Notice this is the smallest distance from any cop to the robber before the cops move in turn $i+1$.
We now describe the movement of $C^0$.

Let $G_0 = G$.
On turn $1$, the cop $C^0$ is on $w_0$.
Since $D_0 > 2$ and $G$ is connected, there exists a vertex $u \in N[w_0]$ such that $2 \leq \dist(u,v_0) < D_0$.
Cop $C^0$ moves to such a vertex $u$ which sets $w_1 = u$.
The robber moves from $v_0$ to $v_1$.
Let $G_1 = G_0 - (N[w_0]\setminus \{w_1\})$.
By assumption the robber moves so that $D_1 \geq 3$.
If $v_1 \in N[w_0]$, then the robber is at distance $2$ from $w_1$ a contradiction.
Thus, $v_1 \in V(G_1)$.
Then, $2 < D_1 \leq \dist_{G_1}(w_1,v_1) < \infty $. This completes the base case of the following induction.

Let $i < k-2$.
Suppose that for all $1 \leq j \leq i$
\[2 < D_j \leq \dist_{G_j}(w_j,v_j) < \infty,\]
and $G_j = G_{j-1} - (N[w_{j-1}]\setminus \{w_j\})$.
Consider the game in turn $i+1$.
Since $D_i \leq \dist_{G_i}(w_i,v_i)$ which is finite, 
$v_i \in V(G_i)$ and there exists a vertex $u \in N_{G_i}[w_i]$ 
such that there is a path from $u$ to $v_i$ in $G_i$ which is vertex disjoint from $N[w_i]\setminus \{u\}$.
The cop $C^0$ moves from $w_i$ to $u$, which sets $w_{i+1} = u$.
The robber then follows their strategy to their next vertex $v_{i+1}$. Hence, $D_{i+1} \geq 3$.

Let $G_{i+1} = G_i - (N[w_i]\setminus \{w_{i+1}\})$.
Since $i < k-2$ and the cops occupy vertices 
\[
\Big \{w_{i+1-3j}: 0 \leq j \leq \left\lceil\frac{k-1}{3} \right\rceil-1 \Big\} \cup \{w_0\}
\]
the cops occupy a dominating set of the walk $w_0\dots w_{i+1}$.
Hence, $D_{i+1}\geq 3$ implies that $v_{i+1}$ is not adjacent to any vertex $w_j$.
That is,
$v_{i+1} \notin \cup_{j=0}^{i+1} N[w_j]$,
which implies that $v_{i+1} \in V(G_{i+1})$.
Thus, $2 < D_{i+1} \leq \dist_{G_{i+1}}(w_{i+1},v_{i+1}) < \infty$. So our induction is sufficient for all
$i \leq k-2$.

Consider the game before the cops move in turn $k-1$.
The robber occupies vertex $v_{k-2}$ in $G_{k-2}$, hence the graph $G_{k-2}$ is non-empty.
Moreover,
$D_{k-2} \leq \dist_{G_{k-2}}(w_{k-2},v_{k-2}) < \infty$
implies that $w_{k-2}$ has a neighbour $u$ in $G_{k-2}$.
By the definition of our vertices $w_i$ and our graphs $G_i$, we note that $w_0w_1\dots w_{k-2}u$ is an induced path of length $k$ in $G$.
This contradicts the fact that $G$ is $P_k$-free.

We conclude that for some $0 \leq i \leq k-2$ it must the case that $D_{i}\leq 2$.
Suppose without loss of generality that $t\geq 0$ is the least integer such that $D_t \leq 2$.
Since the cops are about to move when the distance $D_t$ is computed, the robber loses if $D_t \leq 1$.
Suppose then that $D_t = 2$.
On turn $t+1$ the cops proceed to Step 2.

\vspace{0.25cm}
\noindent\underline{Step.2:} It is the cops turn, vertices $w_0\dots w_t$ form an induced path, each cop $C^j$ occupies vertex $w_{t-3j}$, 
where $w_{-x} = w_0$ for any positive $x$, 
cop $C^\uparrow$ occupies vertex $w_t$,
cops $C^\downarrow$ and $C^\Downarrow$ occupy vertex $w_0$,
and there exists a cop $C$ such that $\dist(C,R) = 2$.
\vspace{0.25cm}

If $\dist(w_0,v_t) = 2$, then let $C = C^{\Downarrow}$. 
Else, if $\dist(w_t,v_t) = 2$ let $C^0 = C$.
In all other cases if there are multiple cops at distance $2$ from $R$, then choose $C$ to be a fixed but arbitrary one of these cops.
Let $u_0$ denote the location of the cop $C$ at distance $2$ from the robber $R$.

We proceed in a similar manner to Step 1, 
but we must handle the transition of cops from the path $w_0\dots w_t$ to a new path $u_0\dots u_\ell$ carefully.
If $C^i = C$ for some $i>0$, then let $M = \min\{i, \lceil \frac{k-1}{3} \rceil-1-i\}$.
Otherwise, let $M = 0$, when $C$ is either $C^\Downarrow$ or $C=C^0$.
Thus, $M$ measures how close the cop $C$ is to an end of the path $w_0\dots w_t$.
We relabel the cops as follows, 
\begin{itemize}
    \item Let $\mathcal{C}^0 = C$, and
    \item Let $\mathcal{C}^\uparrow = C^\uparrow$ and $\mathcal{C}^\downarrow = C^\downarrow$  and
    \item if $M>0$ and $C^i = C$, then for all $0 < j \leq M$ we let $\mathcal{C}^{2j-1} = C^{i-j}$ and $\mathcal{C}^{2j} = C^{i+j}$, and
    \item if $M\geq 0$ and $C^i = C$, then for $j > M$ we let $\mathcal{C}^{2M+j} = C^{i\pm j}$, and
    \item if $C^\Downarrow = C$, then for all $j\geq 1$, $\mathcal{C}^j = C^{\lceil \frac{k-1}{3} \rceil  -j}$, and
\end{itemize}
Notice that for $j>M$, at most one of $C^{i-j}$ or $C^{i+j}$ exists so this labelling is well defined.

We reset the turn counter to $0$ to avoid any confusion.
For $i\geq 0$ we let $r_i$ denote the position of robber before the cops move in turn $i+1$, 
and let $u_i$ be the location of the cop $\mathcal{C}^0$ before the cops move in turn $i+1$.
For $i\geq 0$, 
let 
\[
d_i = \min\Big(\Big\{\dist(u_0,r_i)\Big\} \cup  \Big\{\dist(u_{i-3j},r_i): 0 \leq j \leq \left\lceil\frac{k-1}{3} \right\rceil-1\Big\} \Big).
\]
As in Step 1 we let $u_{-x} = u_0$  when $x$ is a positive integer.
Then $d_0 = 2$.

As in Step 1, we describe how cops other than 
$\mathcal{C}^0$ move in terms of the previous moves of $\mathcal{C}^0$.
Then, we describe the movement of $\mathcal{C}^0$.
However, unlike in Step 1, 
we also describe conditions under which all cops break from this strategy in order to capture the robber.

All cops $\mathcal{C}$ other than $\mathcal{C}^0$ begin by proceeding to $u_0$ along the path $w_0\dots w_t$.
For all $j>0$, since each cop $\mathcal{C}^j$ has distance at most $3j$ to $u_0$ along the path $w_0\dots w_t$, 
cop $\mathcal{C}^j$ reaches $u_0$ on turn $3j$ or sooner.
If $\mathcal{C}^j$ reaches $u_0$ prior to turn $3j$, they remain on vertex $u_0$ until turn $3j$.
So for all $j>0$, before moving in turn $3j+1$ cop $\mathcal{C}^j$ occupies vertex $u_0$,
and on turn $3j+1$ cop $\mathcal{C}^j$
begins to move along the walk $u_0\dots u_i$.
So for all $j\geq 0$, if $i\geq 3j$, then cop $\mathcal{C}^j$ occupies vertex $u_{i-3j}$ before moving on turn $i+1$.
Once cop $\mathcal{C}^\uparrow$ or $\mathcal{C}^\downarrow$ reaches $u_0$, they remain at $u_0$ for the rest of play.
See Figure~\ref{fig:w-to-u} for some assistance visualizing this movement.

\begin{figure}[h!]
    \centering
    \includegraphics[scale = 0.75]{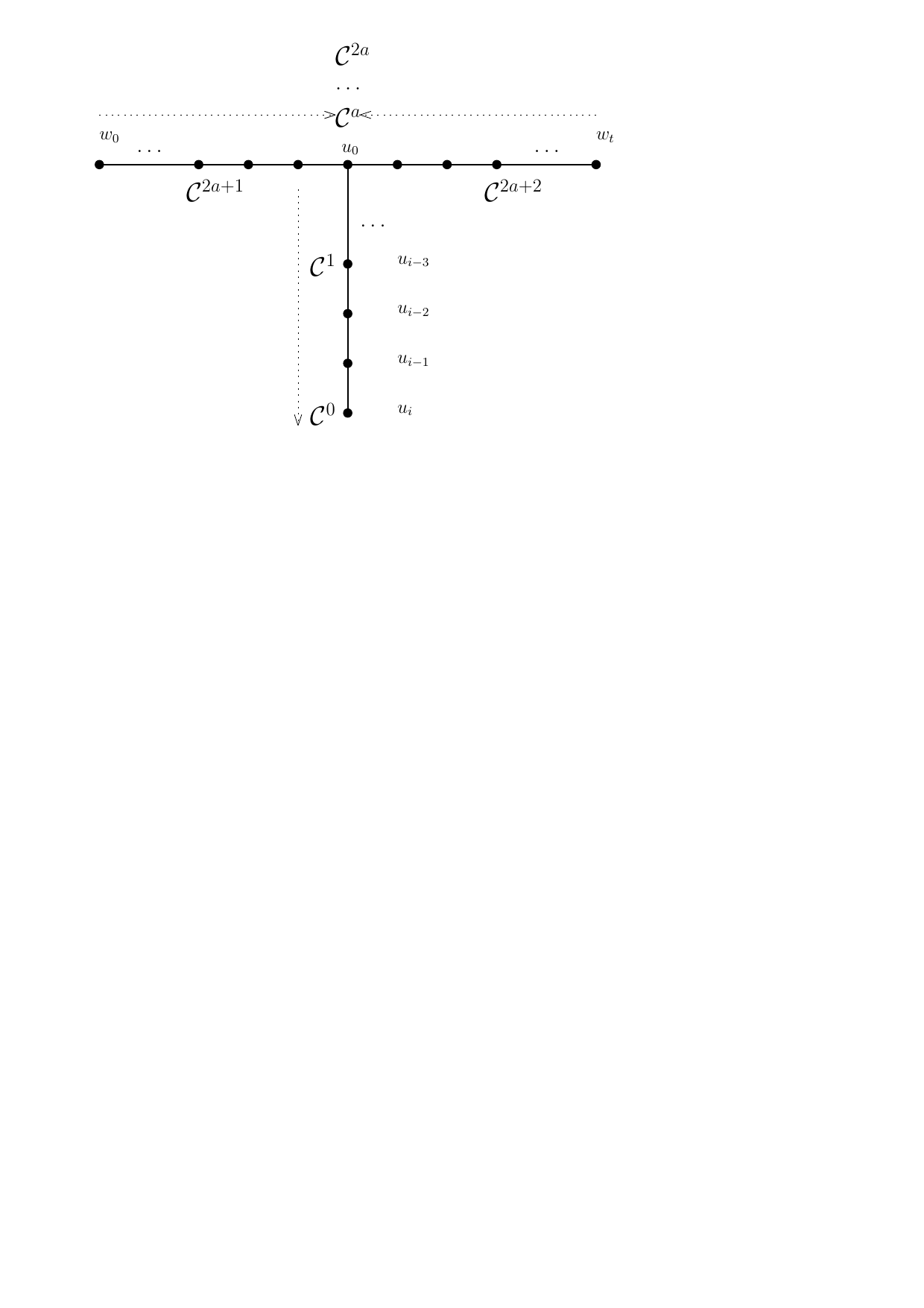}
    \caption{An image of how cops other than $\mathcal{C}^0$ move on turn $i=3a < M$. Here the walk $u_0\dots u_i$ need not be internally vertex disjoint from the path $w_0\dots w_t$.}
    \label{fig:w-to-u}
\end{figure}
\noindent\underline{Claim 1:} For all $i\geq 3$ before the cops move on turn $i+1$, 
there is a cop $\mathcal{C}$ occupying $u_0$ which remains on $u_0$ in turn $i+1$.
\vspace{0.25cm}

We deal with cases separately depending on which vertex was selected as $\mathcal{C}^0$.
If $\mathcal{C}^0 = C^{\Downarrow}$, then cop $\mathcal{C}^\downarrow$ begins on $u_0$ 
before the cops move in turn $1$, then never moves again. So on all turn $i\geq 3$, $\mathcal{C}^\downarrow$ will be a cop on $u_0$.
Similarly, if $\mathcal{C}^0 = C^{0}$, then $\mathcal{C}^\uparrow = C^{\uparrow}$ begins on $u_0$ 
and will remain there for all turns $i\geq 3$.

Otherwise $\mathcal{C}^0 = C^{\ell}$ for some $\ell>0$.
In this case $M>0$.
So for all $0 < j \leq M$ there is a cop $\mathcal{C}^{2j-1}$ and a cop $\mathcal{C}^{2j}$, both of whom will arrive at vertex $u_0$ after $3j$ turns or earlier.
Notice that once cop $\mathcal{C}^{2j-1}$ moves to $u_1$, cop $\mathcal{C}^{2j}$ will remain on $u_0$ for $3$ more turns. 
If $j < M$, this allows there to be enough time for cops $\mathcal{C}^{2j+1}$ and $\mathcal{C}^{2j+2}$ to arrive at $u_0$ before $\mathcal{C}^{2j}$ leaves $u_0$.
If $j = M$, then before $\mathcal{C}^{2j}$ leaves $u_0$ cop $\mathcal{C}^\uparrow$ or $\mathcal{C}^\downarrow$ will arrive at $u_0$.
Once $\mathcal{C}^\uparrow$ or $\mathcal{C}^\downarrow$ arrives there will always be a cop on $u_0$ for the rest of the game.

Therefore, for all $i\geq 3$, there will be a cop on $u_0$ before the cops move on turn $i+1$ which remains on $u_0$ in turn $i+1$.
\hfill $\diamond$
\vspace{0.5cm}

\vspace{0.25cm}
\noindent\underline{Claim 2:} For all $i\geq 3$, if $P: u_0\dots u_i$ is a path, then the cops on $P$ $\frac{1}{3}$-saturate $P$.
\vspace{0.25cm}

Let $i\geq 3$ and suppose $P: u_0\dots u_i$.
By the instructions given to cops $\mathcal{C}^j$ for all $j$ such that $i\geq 3j$, the cop $\mathcal{C}^j$ is located on vertex $u_{i-3j}$.
Meanwhile, Claim 1 implies there is always a cop on vertex $u_0$ when $i\geq 3$.
Hence, for any consecutive pair of vertices $u_{\ell}u_{\ell+1}$ on the path $P$ either there is a cop on one, or both, of these vertices,
or there is a cop on $u_{\ell-1}$ and another cop on $u_{\ell+2}$.
\hfill $\diamond$
\vspace{0.5cm}

Now we describe the movement of the cop $\mathcal{C}^0$.
In turn $1$ the cop $\mathcal{C}^0$ moves as follows.
Let $H_0 = G$.
Since $\dist(u_0,r_0) = 2$, there is a vertex $x \in N(u_0) \cap N(r_0)$.
The cop $\mathcal{C}^0$ moves to $x$, which sets $u_1 = x$.
In response, the robber moves from $r_0$ to some vertex $r_1 \notin N[u_1]$.
Since $u_0u_1r_0$ and $u_1r_0r_1$ are both induced paths, 
the fact that $G$ is $C_4$-free implies that $u_0$ and $r_1$ are non-adjacent.
Let $H_1 = H_0 - (N[u_0]\setminus \{u_1\})$.
It follows that $\dist_{H_1}(u_1,r_1) = 2$, and $r_1$ is non-adjacent to any vertex in $\{u_0,u_1\}$.

Let $i < k-1$.
Suppose that for all $1 \leq j \leq i$, $\dist_{H_j}(u_j,r_{j-1}) = 1$ and $\dist_{H_j}(u_j,r_{j}) = 2$,
while $H_j = H_{j-1} - (N[u_{j-1}]\setminus \{u_j\})$.
By the above, when $i=1$, the base case holds.
Consider the game in turn $i+1$.

In turn $i+1$,
the cop $\mathcal{C}^0$ moves from $u_i$ to $r_{i-1}$.
This sets $u_{i+1} = r_{i-1}$.
By the induction hypothesis $u_{i+1} = r_{i-1}$ is a vertex of $H_i$.
In response the robber moves from $r_{i}$ to some vertex $r_{i+1} \notin N[u_{i+1}]$.

\vspace{0.25cm}
\noindent\underline{Claim 3:} $r_{i+1}$ is not adjacent to $u_i$ or $u_{i-1}$.
\vspace{0.25cm}

By the induction hypothesis $r_{i-1}$ and $r_i$ are both vertices of $H_i$.
Hence, by the definition of $H_i$,
\[
\{r_{i-1},r_i\} \cap \left(\bigcup_{j=0}^{i-1} N[u_j] \right) = \emptyset.
\]
Thus, $u_{i-1}u_{i}r_{i-1}r_i$ is an induced path.
Furthermore, we know $u_{i+1} = r_{i-1}$ and $r_{i+1}$ are not adjacent.
Suppose $u_{i}$ and $r_{i+1}$ are adjacent, then $u_ir_{i-1}r_ir_{i+1}$ induces $C_4$.
Since this is a contradiction we note that $u_{i}$ and $r_{i+1}$ are non-adjacent.
Suppose now that  $u_{i-1}$ and $r_{i+1}$ are adjacent, then $u_{i-1}u_ir_{i-1}r_ir_{i+1}$ induces $C_5$.
Since this is a contradiction we note that $u_{i-1}$ and $r_{i+1}$ are non-adjacent.
\hfill $\diamond$
\vspace{0.5cm}

\vspace{0.25cm}
\noindent\underline{Claim 4:} If $r_{i+1}$ has a neighbour in $u_0\dots u_{i+1}$, 
then the cops have a strategy to capture the robber. 
\vspace{0.25cm}

Recall $r_{i+1} \notin N[u_{i+1}]$.
If $i=1$, then by Claim 3 $r_{i+1} = r_2$ is not adjacent to $u_{i-1} = u_0$ or $u_{i}= u_1$.
So the claim is vacuously true when $i = 1$.
We consider the cases $i = 2$, and $i>2$ separately.

If $i = 2$, then Claim 3 implies $r_{i+1} = r_3$ is not adjacent to $u_{i-1} = u_1$ or $u_{i}= u_2$.
Additionally, Claim 1 implies that a cop occupies $u_0$ at the start of turn $4$.
Call this cop $\mathcal{C}$.
Recalling that $r_{i+1} = r_3$ is the location of the robber prior to the cops move in turn $4$,
if $r_3$ is adjacent to $u_0$, then cop $\mathcal{C}$ can capture the robber on turn $4$.
This requires $\mathcal{C}$ to break from their normal behaviour,
but this has no effect on any other cases, because $\mathcal{C}$ captures the robber immediately.
This concludes the $i=2$ case.

Now suppose that $i\geq 3$ and that $r_{i+1}$ has a neighbour in $u_0\dots u_{i+1}$.
If $r_{i+1}$ is adjacent to $u_{i+1}$, then cop $\mathcal{C}^0$ will capture in turn $i+2$.
Suppose then without loss of generality that $r_{i+1}$ is not a neighbour of $u_{i+1}$ and $r_{i+1}$ is adjacent to some vertex in  $u_0\dots u_{i}$.
Thus, there is a vertex $v \in N[r_i]\setminus N[u_{i+1}]$ with neighbours on  $u_0\dots u_i$.
Let $v$ be any such vertex,
and suppose without loss of generality
that $0 \leq \ell \leq i$ such that $u_\ell \in N(v)$.

Consider the cops' move on turn $i+1$, that is the cops' move prior to the robber's move from $r_i$ to $r_{i+1}$.
Notice that the cops will not know which vertex the robber selects as $r_{i+1}$, 
however, this has no impact on the following argument.
The cop $\mathcal{C}^0$ moves as normal, while other cops break from their normal behaviour to capture the robber.
The signal for this switch in behaviour by the cops is the existence of a vertex 
$v \in N[r_i]\setminus N[r_{i-1}]$ with neighbours on  $u_0\dots u_i$.
The cops can check if there exists a vertex $v$
without knowledge of where the robber will move in turn $i+1$.

If a cop $\mathcal{C}$ is standing on vertex $u_\ell$, or any other vertex adjacent to $v$,
before the cops move on turn $i+1$,
then $\mathcal{C}$ can move to $v$ at the same time the cop $\mathcal{C}^0$ moves to $r_{i-1}$.
Recall that $v$ and $u_{i+1} = r_{i-1}$ are non-adjacent.
Since $G$ is $\claw$-free the independence number of $G[N[r_i]]$ is at most $2$.
Since  $v$ and $u_{i+1} = r_{i-1}$ are non-adjacent and in the neighbourhood of $r_i$, $\{v,u_{i+1}\}$ forms a dominating set of $N[r_i]$. 
So every neighbour of $r_i$ is adjacent to either $r_{i-1}$ or $v$, that is,
\[
N[r_i] \subseteq N[v] \cup N[r_{i-1}].
\]
This implies that one of the cops $\mathcal{C}$ and $\mathcal{C}^0$, which are distinct, will capture the robber on the next cop turn, regardless of where the robber chooses to move.

Otherwise, no cop is standing adjacent to $v$.
By Claim 1 there is a cop on $u_0$ before the cops move in turn $i+1$,
so $\ell> 0$,
while Claim 3 implies that $\ell \leq i-2$.
By the definition of the walk $P: u_0\dots u_{i+1}r_i$ and the graphs $H_{i}$, $P$ is an induced path.
Thus, $F = G[\{u_0,\dots, u_{i+1},r_i\} \cup ( N[r_{i}]\setminus N[u_{i+1}])]$ 
is a $(i+2,|N[r_{i}]\setminus N[u_{i+1}]|, S)$-flail where $S$ is determined by the adjacencies of 
$\{u_0,\dots, u_{i+1}\}$ with $N[r_{i}]\setminus N[u_{i+1}]$.

Since $G$ is $(\claw,\butterfly)$-free, $F$ is $(\claw,\butterfly)$-free.
Then $0 < \ell \leq i-2$ and
Lemma~\ref{Lemma: Flail claw Lemma} implies that 
$v$ must be adjacent to $u_{\ell-1}$ or $u_{\ell+1}$.
We recall that 
the instructions provided to cops $\mathcal{C}^j$ ensure that there is a cop on each vertex 
$\{u_{i-3j}: 0 \leq j \leq \lceil \frac{k-1}{3}\rceil-1\}$ where $u_{-x} = u_0$ for positive integers $x$.

Since $i\geq 3$, there is a cop on vertex $u_{i-3}$.
If $\ell = i-2$, then $u_{i-3}$ or $u_{i-1}$ is adjacent to $v$.
We have already shown $v$ and  $u_{i-1}$ being adjacent contradicts $G$ being $(C_4,C_5)$-free.
So $v$ must be adjacent to $u_{i-3}$.
But there is a cop on $u_{i-3}$, contradicting that  no cop is standing adjacent to $v$ before the cops move on turn $i+1$.
So $\ell \leq i-4$, since $u_{i-3}$ and $u_{i-2}$ are both non-adjacent to $v$.

Without loss of generality and by Lemma 3.1, suppose that $v$ is adjacent to $u_\ell$ and $u_{\ell-1}$, 
neither of which contains a cop before the cops move on turn $i+1$.
Since $F$ is $(\claw,\butterfly)$-free, Lemma~\ref{Lemma: Flail claw,butterfly Lemma} implies 
every vertex in $N[r_i]\setminus N[u_{i+1}]$ is adjacent to either $u_\ell$ or $u_{\ell-1}$.
By Claim 2, before moving in turn $i+1$ the cops $\frac{1}{3}$-saturate $u_0\dots u_i$. Since there is no cop on $u_\ell$ and $u_{\ell-1}$, 
this implies there is a cop $\mathcal{C}^A$ on $u_{\ell-2}$ and a cop $\mathcal{C}^B$ on $u_{\ell+1}$.
Given $\ell \leq i-4$ we conclude that both  $\mathcal{C}^A$ and  $\mathcal{C}^B$ are distinct from  $\mathcal{C}^0$.
Thus, cop $\mathcal{C}^A$ can move to $u_{\ell-1}$, and cop $\mathcal{C}^B$ can move to $u_{\ell}$ 
at the same time that cop $\mathcal{C}^0$ moves to $r_{i-1}$.
We have established 
\[
N[r_i] \subseteq N[u_{\ell-1}] \cup N[u_{\ell}]  \cup N[r_{i-1}]
\]
implying the cops can capture the robber on the next cop turn.
This concludes the proof of the claim.
\hfill $\diamond$
\vspace{0.5cm}

Suppose then that $r_{i+1}$ has no neighbour in $u_0\dots u_{i+1}$.
Then, letting 
\[
H_{i+1} = H_i - (N[u_{i}]\setminus \{u_{i+1}\}),
\] we note that 
$\dist_{H_{i+1}}(u_{i+1},r_{i}) = 1$ and $\dist_{H_{i+1}}\left(u_{i+1},r_{i+1}\right) = 2$.
So the induction hypothesis is maintained for another turn.
Next we show the induction hypothesis cannot be maintained for an infinite sequence of turns.

By the procedure already defined, $i\geq 1$.
Suppose for contradiction that $i\geq k-3$.
By the induction hypothesis and the definitions of the graphs $H_j$, the walk $P: u_0\dots u_ir_{i-1} r_i$ 
is an induced path of length at least $k$.
This contradicts the fact that $G$ is $P_k$-free.

Thus, after
finitely many turns
$r_{i+1}$ has a neighbour in $u_0\dots u_{i+1}$.
When this occurs Claim 4 proves the cops can capture the robber.
Therefore, $\lceil\frac{k-1}{3}\rceil+3$ cops have a strategy for catching the robber.
This completes the proof.
\end{proof}

In summary,
as is standard in Cops and Robbers, the cops can grow their territory to include the entire graph. 
Interestingly, and unlike many other cop territory arguments, 
the cops' territory is not monotone increasing. 
Instead, after some number of turns, the cops abandon their initial territory
in order to rebuild a new territory that will eventually become the entire graph.
This complexity is required, since the cops must begin at distance at most $2$ from the robber 
for their territory to include the entire graph.

\section{Clique Substitution of Graphs with Forbidden Path Subgraphs}

In this section we consider properties of the clique substitution  operation in order to prove Theorem~\ref{Thm: path to claw reduction}.
The first lemma is implicitly stated in \cite{joret2010cops}, as it is easy to see. We provide a proof for completeness because it is a key observation for the proof of Theorem~\ref{Thm: path to claw reduction}.

\begin{lemma}\label{Lemma: Clique-Sub Neighbourhood}
    If $H$ is the clique substitution  of $G$, then for all vertices $u$ in $G$ and $(u,v) \in V(K^u)$ in $H$, 
    $N_H((u,v))$ induces two disjoint cliques, one of size $\deg(u) - 1$ and the other size $1$.
\end{lemma}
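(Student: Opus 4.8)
The plan is to unwind the definition of clique substitution directly at the vertex $(u,v)$ and identify its neighbourhood in $H$. First I would recall that $(u,v)\in V(K^u)$ exists precisely because $v\in N_G(u)$, and that the vertices of $K^u$ are exactly $\{(u,w): w\in N_G(u)\}$, so $|V(K^u)| = \deg_G(u)$. The edges of $H$ incident to $(u,v)$ are of two kinds: the edges inside the clique $K^u$, and the single ``crossing'' edge added because $uv\in E(G)$, namely the edge joining $(u,v)$ to $(v,u)\in V(K^v)$. No other edges touch $(u,v)$: crossing edges are only added between $(a,b)$ and $(b,a)$ for $ab\in E(G)$, and $(u,v)$ participates in exactly one such pair.

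From this, $N_H((u,v)) = \bigl(V(K^u)\setminus\{(u,v)\}\bigr) \cup \{(v,u)\}$. The first set has size $\deg_G(u)-1$ and induces a clique, being a subset of the clique $K^u$. The singleton $\{(v,u)\}$ trivially induces a clique of size $1$. It remains to check these two cliques are disjoint and have no edges between them in the induced subgraph $H[N_H((u,v))]$. Disjointness: $(v,u)\in V(K^v)$ while the other vertices lie in $V(K^u)$, and $K^u, K^v$ are vertex-disjoint for $u\neq v$ (and if $u=v$ this case does not arise since $u\notin N_G(u)$ for a simple graph). For the absence of edges between them: any edge of $H$ from $(v,u)$ to a vertex of $K^u$ would have to be a crossing edge of the form $(v,u)(u,v)$, which is excluded since $(u,v)\notin N_H((u,v))$, or would have to lie inside a common clique, impossible as $(v,u)\notin V(K^u)$.

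I expect the argument to be essentially a matter of carefully bookkeeping the two edge types in the definition; there is no real obstacle. The only point requiring a moment of care is confirming that $(v,u)$ has no neighbour inside $V(K^u)\setminus\{(u,v)\}$ within $H$ — i.e.\ that the single crossing edge at $(v,u)$ going ``back'' is exactly $(v,u)(u,v)$ and not to some other vertex of $K^u$ — which again follows immediately from the fact that crossing edges pair $(a,b)$ with $(b,a)$ and nothing else. Thus $N_H((u,v))$ induces the disjoint union of a clique of size $\deg_G(u)-1$ and a clique of size $1$, as claimed.
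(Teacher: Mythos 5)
Your proposal is correct and follows essentially the same route as the paper: identify $N_H((u,v))$ as $(V(K^u)\setminus\{(u,v)\})\cup\{(v,u)\}$ from the two edge types, then note that the unique crossing edge at $(v,u)$ goes back to $(u,v)$, so $(v,u)$ has no neighbour in $K^u\setminus\{(u,v)\}$. Nothing is missing.
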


\begin{proof}
    Let $H$ be the clique substitution  of $G$.
    Without loss of generality 
    let $u$ be a vertex of $G$ and $(u,v)$ a vertex in $K^u$.
    Since $v \in N(u)$ in $G$, there is an edge $(u,v)(v,u)$ in $H$.
    All other edges incident to $(u,v)$ are edges of $K^u$, and the size of $K^u$ is $|N(u)|$.
    Thus, the neighbourhood $N_H((u,v))$ consists of $(v,u)$ and the vertices of $K^u - (u,v)$. 
    There is at most $1$ edge between any pair of cliques $K^u$ and $K^v$ in $H$,
    so there is no edge $(u,x)(v,u)$.
    This completes the proof.
\end{proof}

Next, we consider some graphs that cannot appear in clique substitutions as induced subgraphs.
We are especially interested in the graphs forbidden in Theorem~\ref{Thm: path,claw,butterfly free}.

\begin{lemma}\label{Lemma: Clique-Sub C_4,C_5 free}
    If $H$ is the clique substitution  of $G$, then $H$ is $(C_4,C_5)$-free.
\end{lemma}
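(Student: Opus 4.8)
\textbf{Proof plan for Lemma~\ref{Lemma: Clique-Sub C_4,C_5 free}.}

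The plan is to argue by contradiction: suppose $H$, the clique substitution of some graph $G$, contains an induced cycle $C$ of length $4$ or $5$. The key structural fact I would lean on is Lemma~\ref{Lemma: Clique-Sub Neighbourhood}: every vertex $(u,v)$ of $H$ has a neighbourhood that splits into a clique of size $\deg_G(u)-1$ (namely $K^u - (u,v)$) together with a single pendant-type neighbour $(v,u)$ lying in the ``partner'' clique $K^v$. Equivalently, each vertex has at most one neighbour outside its own clique $K^u$. I would phrase this as a lemma-internal observation: the cliques $\{K^u : u \in V(G)\}$ partition $V(H)$, every edge of $H$ is either \emph{internal} to some $K^u$ or is one of the \emph{bridging} edges $(u,v)(v,u)$, and each vertex is incident to at most one bridging edge.

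Now I would trace the cycle $C = x_1 x_2 \cdots x_m x_1$ (with $m \in \{4,5\}$) through the cliques. Consider the sequence of cliques $K^{u_1}, K^{u_2}, \dots$ containing $x_1, x_2, \dots$ respectively. Since consecutive vertices on $C$ are adjacent, consecutive cliques are either equal (the edge is internal) or joined by a bridging edge. First, $C$ cannot lie entirely inside one clique $K^u$, since $K^u$ is a clique and an induced cycle of length $\geq 4$ has non-adjacent vertices. So $C$ uses at least one bridging edge, hence (being a cycle) at least two bridging edges. But each vertex is incident to at most one bridging edge, so along the cycle bridging edges and internal edges must alternate in a constrained way: no vertex can be an endpoint of two bridging edges, which means between any two bridging edges on $C$ there is at least one internal edge, i.e. at least one ``stay'' inside a clique.

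From here I would do the short case analysis on $m$. If two consecutive vertices $x_a, x_{a+1}$ lie in the same clique and the next step $x_{a+1}x_{a+2}$ is a bridging edge into $K^{w}$, then $x_{a+2}$'s only neighbour in $K^{u}$ is $x_{a+1}$; in particular $x_a$ and $x_{a+2}$ are non-adjacent (good, they are distance $2$ on $C$) but I must also check the ``wrap-around'' chords. Tracking how many distinct cliques $C$ visits: with $m=4$ and at least two internal edges forced, $C$ visits at most two cliques, forcing a chord inside one of them; with $m=4$ and two bridging edges separated by single internal edges, the pattern is $K^a, K^a, K^b, K^b$ with bridging edges $x_2 x_3$ and $x_4 x_1$ — but then $x_1, x_2 \in K^a$ are adjacent and $x_3,x_4 \in K^b$ are adjacent, and the cycle closes only if $x_1$ has a bridging neighbour in $K^b$ \emph{and} an internal neighbour $x_2$, which is allowed, yet then $x_1$'s partner clique is $K^b$, forcing $u_1$'s vertex in $K^b$ to be the unique bridging neighbour, and checking $x_2 x_3$ bridging forces $x_2$'s partner to be in $K^b$ as well — two vertices of $K^a$ both bridging to $K^b$ is impossible since there is at most one edge between $K^a$ and $K^b$. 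For $m = 5$ the parity means one clique must be visited in two non-consecutive blocks or three bridging edges occur, and the same ``at most one edge between two cliques'' constraint, plus ``at most one bridging edge per vertex'', kills every arrangement. I expect the main obstacle to be organizing this case analysis cleanly rather than any real difficulty — the single genuinely load-bearing facts are (i) at most one bridging edge per vertex and (ii) at most one bridging edge between any pair of cliques, and every short induced cycle violates one of them. I would finish by noting $H$ is therefore $(C_4, C_5)$-free.
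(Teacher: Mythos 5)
Your proposal is correct and takes essentially the same route as the paper: classify the edges of $H$ as internal to a clique $K^u$ or bridging between cliques, use that each vertex lies on exactly one bridging edge and that any pair of cliques is joined by at most one bridging edge, and rule out each short induced cycle by tracing it through the cliques. The only cosmetic difference is the $C_5$ case, where the paper compresses your block/matching case analysis into a parity argument: the cycle's edges would yield a proper $2$-edge-colouring of $C_5$, impossible since $\chi'(C_5)=3$.
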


\begin{proof}
    Let $H$ be the clique substitution  of $G$.
    From the definition of $H$ we may $2$ colour the edges of $H$ into blue edges, which appear inside of a clique $K^u$ 
    for $u \in V(G)$, and red edges which are incident to two distinct cliques $K^u$ and $K^v$.
    By Lemma~\ref{Lemma: Clique-Sub Neighbourhood} each vertex in $H$ is incident to exactly $1$ red edge.
    
    Suppose for contradiction that $H$ contains an induced $C_4$, with vertices $abcd$.
    Since $a,c$ and $b,d$ are not adjacent, $a$ and $c$ belong to different cliques $K^u$ and $K^v$,
    while $b$ and $d$ also belong to distinct cliques $K^w$ and $K^z$.
    Here $u\neq v$ and $w \neq z$, but we make no claims about how $u$ relates to $w,z$ or how $v$ relates to $w,z$.
    
    Thus, at least one of the edges $ab$ or $bc$ is red.
    Since $b$ is incident to exactly one red edge, we suppose without loss of generality that $ab$ is red and $bc$ is blue.
    Then $w = v$.
    Since $a$ is incident to exactly one red edge, the edge $ad$ is blue, implying $u = z$.
    This implies $cd$ is a red edge since $u \neq v$.
    But there is at most one red edge between the cliques $K^u$ and $K^v$ contradicting that $ab$ and $cd$ are both red edges.
    It follows that $G$ is $C_4$-free.

    Proving $G$ is $C_5$-free is faster.
    Suppose $Q$ is an induced $C_5$ in $H$.
    Since $C_5$ is triangle-free no triple of vertices in $Q$ belong to a single clique $K^u$.
    Hence, each vertex in $Q$ is incident to at most $1$ blue edge in $Q$.
    Recalling that all edges in $H$ are incident to $1$ red edge, this implies that $Q$ can be properly $2$-edge-coloured.
    Since $Q$ is an induced $C_5$ this is a contradiction since $\chi'(C_5)= 3$.
\end{proof}

\begin{lemma}\label{Lemma: Clique-Sub induced paths}
    If $G$ is a graph whose longest path is length $p$ and $H$ is the clique substitution  of $G$, then $H$ is $P_{2p+1}$-free.
\end{lemma}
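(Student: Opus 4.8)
The plan is to show that any induced path $Q$ in the clique substitution $H$ projects down to a walk in $G$ of a controlled length, and then bound $|V(Q)|$ by roughly twice the number of vertices of that walk. Concretely, I would set up the natural projection $\pi\colon V(H)\to V(G)$ sending $(u,v)\mapsto u$. As in the proof of Lemma~\ref{Lemma: Clique-Sub C_4,C_5 free}, colour the edges of $H$ red (between two distinct cliques $K^u,K^v$) and blue (inside a single clique $K^u$); by Lemma~\ref{Lemma: Clique-Sub Neighbourhood} every vertex of $H$ is incident to exactly one red edge. The key structural observation is that if $Q\colon q_1q_2\dots q_m$ is an induced path in $H$, then $Q$ cannot contain two consecutive blue edges: three vertices $q_{i},q_{i+1},q_{i+2}$ joined by blue edges all lie in one clique $K^u$, hence $q_iq_{i+2}$ would be an edge, contradicting that $Q$ is induced. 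So along $Q$ the blue edges form an independent set in the "line sense" — no two are adjacent — which means among any two consecutive edges of $Q$ at least one is red.

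First I would use this to extract from $Q$ a path (or walk) in $G$. Each red edge $q_iq_{i+1}$ of $Q$ has $\pi(q_i)\pi(q_{i+1})$ an edge of $G$; each blue edge $q_iq_{i+1}$ has $\pi(q_i)=\pi(q_{i+1})$. So $\pi(q_1)\pi(q_2)\dots\pi(q_m)$, after deleting repetitions coming from blue edges, is a walk $W$ in $G$ whose number of edges equals the number of red edges of $Q$. Since no two consecutive edges of $Q$ are both blue, the number of red edges of $Q$ is at least $\lceil (m-1)/2\rceil$, so $W$ has at least $\lceil (m-1)/2\rceil$ edges. The remaining point is that $W$ is actually an induced path in $G$ — or at least that it contains a path in $G$ of the same length. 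Here I would argue that $\pi$ restricted to $V(Q)$ is injective on vertices that are endpoints of red edges: if $\pi(q_i)=\pi(q_j)=u$ for $i<j$ then $q_i,q_j\in K^u$, so $q_iq_j$ is an edge of $H$, forcing $j=i+1$ (else $Q$ not induced), and then $q_iq_{i+1}$ is a blue edge. Thus distinct vertices of $Q$ with the same image occur only as the two endpoints of a blue edge, which is exactly the collapse we already performed; hence $W$ is a genuine path in $G$ with no repeated vertices, of length $\ge \lceil(m-1)/2\rceil$.

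Since $G$ has longest path of length $p$, this forces $\lceil (m-1)/2\rceil \le p$, i.e. $m-1 \le 2p$, i.e. $m \le 2p+1$. Therefore $H$ contains no induced $P_{2p+2}$; stated contrapositively, if $H$ contained an induced $P_{2p+1}$ we would already be at the boundary — so I should double-check the constant. Re-examining: an induced path on $2p+1$ vertices has $2p$ edges, at least $\lceil 2p/2\rceil = p$ of which are red, giving a path of length $p$ in $G$, which is allowed. So $P_{2p+1}$ can in principle occur and the statement as I have derived it only forbids $P_{2p+2}$. To get the claimed $P_{2p+1}$-freeness I would need the sharper observation that an induced path in $H$ of odd edge-length $2p$ must have \emph{strictly more} than $p$ red edges, or that its two endpoint edges cannot both be "saved" as blue — more precisely, the first and last edges of a maximal induced path can be taken red (or the endpoints extended), squeezing out one more unit; alternatively one shows the projected path in $G$ has length $\ge p+1$ by a parity/endpoint argument. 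Pinning down this off-by-one — showing that the blue edges along $Q$ number at most $\lfloor (m-1)/2\rfloor$ but in the extremal case at most $\lfloor (m-2)/2\rfloor$ because an induced path cannot both start and end with a blue edge while still collapsing to a shortest-possible walk — is the main obstacle, and is where I would spend the most care.
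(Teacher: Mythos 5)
Your approach is essentially the paper's: colour the edges of $H$ red (between cliques) and blue (inside a clique $K^u$), note that an induced path meets each $K^u$ in at most two consecutive vertices, and project the red edges down to a path in $G$. The off-by-one you flag at the end is not a real obstacle but an artifact of reading ``longest path is length $p$'' as ``$p$ edges''. The paper counts length in vertices: $G$ having longest path of length $p$ means $G$ has no path on $p+1$ vertices (this is the convention spelled out in the introduction, where it is said to be equivalent to having no $P_{p+1}$ subgraph, and it is how the inequality $r+1\le p$ is used in the paper's proof). Under that convention your projected path has $r+1$ vertices, where $r$ is the number of red edges of $Q$, so $r\le p-1$; combined with $m\le 2r+2$ this gives $m\le 2p$, i.e.\ exactly $P_{2p+1}$-freeness, with no extra squeezing required. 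Under your edge-count reading the statement you were trying to prove is in fact false: the clique substitution of $C_n$ is $C_{2n}$, whose longest path has $n-1$ edges, yet $C_{2n}$ contains an induced path on $2(n-1)+1=2n-1$ vertices. So the tightening you were searching for could not exist, and the care you planned to spend there is unnecessary once the convention is fixed.

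Two smaller corrections. Your claim that $Q$ has at least $\lceil (m-1)/2\rceil$ red edges is not correct: an induced path in $H$ can both begin and end with a blue edge. For instance, in the clique substitution of the path $abcd$, the vertices $(b,a),(b,c),(c,b),(c,d)$ induce a path whose edge pattern is blue--red--blue. The correct bound is $r\ge\lfloor (m-1)/2\rfloor$, equivalently $m\le 2r+2$, which is how the paper argues it (every vertex of $Q$ meets at most one blue edge of $Q$, so every internal vertex meets a red edge of $Q$). For the same reason, the fix you sketch (``the first and last edges cannot both be blue'') is false. Neither issue matters: $m\le 2r+2$ together with $r+1\le p$ closes the proof. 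Finally, your injectivity observation --- that two vertices of $Q$ with the same projection must be the two endpoints of a blue edge, so the projection is a genuine path in $G$ rather than a walk --- is correct and is a worthwhile point that the paper passes over quickly when it asserts $v_1v_2\dots v_{r+1}$ is a path.
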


\begin{proof}
    Let $H$ be the clique substitution  of $G$.
    From the definition of $H$ we may $2$ colour the edges of $H$ into blue edges, which appear inside of a clique $K^u$ 
    for $u \in V(G)$, and red edges which are incident to two distinct cliques $K^u$ and $K^v$.
    By Lemma~\ref{Lemma: Clique-Sub Neighbourhood} each vertex in $H$ is incident to exactly $1$ red edge.

    Let $P$ be a longest induced path in $H$.
    Since $P$ is an induced path, $P$ is triangle-free.
    Thus, for any clique $K^u$, at most $2$ vertices of $K^u$ appear in $P$.
    Hence, each vertex in $P$ is incident to at most $1$ blue edge in $P$.
    It follows that every non-leaf vertex in $P$ is incident to at least $1$ red edge in $P$.

    Letting $r$ be the number of red edges in $P$, this implies $|P| \leq 2r+2$.
    Suppose $e_1,\dots, e_r$ are the red edges of $P$ as they appear in order.
    For each $i$ let $e_i = (v_{i},v_{i+1})(v_{i+1},v_i)$.
    Then, $v_1v_2\dots v_{r+1}$ is a path in $G$.
    It follows that $r+1 \leq p$ implying that $|P| \leq 2(p-1)+2 = 2p$.
    This completes the proof.
\end{proof}

The next lemma is proven in \cite{joret2010cops}.

\begin{lemma}[Lemma~2.2 \cite{joret2010cops}]\label{Lemma: Clique-Sub cop num}
    If $H$ is the clique substitution  of $G$, then $c(G) \leq c(H)$.
\end{lemma}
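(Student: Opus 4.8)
The plan is to prove the statement directly, by exhibiting a winning strategy for $c(H)$ cops on $G$ that simulates a winning strategy on $H$. Write $\pi\colon V(H)\to V(G)$ for the projection $\pi(v,u)=v$, and (as in the proof of Lemma~\ref{Lemma: Clique-Sub C_4,C_5 free}) call the cross-clique edges $(v,u)(u,v)$ of $H$ \emph{red} and the edges inside the cliques $K^v$ \emph{blue}. First I would record three facts. (P1) If $xy\in E(H)$, then $\pi(x)=\pi(y)$ or $\pi(x)\pi(y)\in E(G)$ (immediate from the definition). (P2) If $x\to y\to z$ is a walk of length two in $H$ (allowing $x=y$ or $y=z$), then $\pi(x)=\pi(z)$ or $\pi(x)\pi(z)\in E(G)$. (P3) For every $x=(v,u)\in V(H)$ and every $w\in N_G[v]$ there is a walk $x\to y\to z$ in $H$ with $\pi(z)=w$: either stay put twice when $w=v$, or step inside $K^v$ to $(v,w)$ and then cross the red edge to $(w,v)$ when $vw\in E(G)$. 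The crux is (P2): from $(v,u)$ the only red edge leaves to $(u,v)$, whose only red edge returns to $(v,u)$, so two consecutive moves in $H$ can cross at most one red edge and therefore project to at most one step of $G$.

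Next I would set up the simulation, run at ``double speed'': two rounds of a shadow game on $H$ per real round on $G$. Fix a winning strategy $S$ for $r:=c(H)$ cops on $H$, with initial cop vertices $p_1,\dots,p_r$; on $G$ the cop player starts cop $i$ at $\pi(p_i)$, and once the robber has chosen its start $R^0$ (not on any $\pi(p_i)$) it starts the shadow robber at an arbitrary vertex of $K^{R^0}$, which is legal since it projects to $R^0$. Thereafter the shadow cops always obey $S$, while the shadow robber is a token the cop player steers so that its $\pi$-image stays equal to the real robber's vertex, each real robber step $R\to R'$ being realized by the two shadow-robber moves furnished by (P3). By (P2), two consecutive shadow-cop moves project to a single legal move of the corresponding real cop, so the cop player can keep real cop $i$ on $\pi$ of shadow cop $i$'s position. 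Since the real cops must commit before the real robber reveals its step, this is done with a one-round offset: the real cops' move in real round $t$ realizes the two shadow-cop moves reacting to the real robber's step of round $t-1$, which depend only on shadow-robber data already determined by then.

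Finally I would harvest the capture. As $S$ is winning and the externally driven shadow robber always makes legal $H$-moves, at some shadow round $K$ a shadow cop stands on the shadow robber; a harmless tweak of $S$ (let the capturing cop track the robber one extra round) lets us take $K$ of the convenient parity. Unwinding the offset, at the matching real round the real cop in question occupies $\pi$ of the shadow robber's vertex at shadow round $K$, which is exactly the real robber's vertex at that instant; since the cops move first that round, this is a capture on $G$, giving $c(G)\le r=c(H)$. I expect the main obstacle to be entirely bookkeeping — pinning down the two-rounds-per-round correspondence together with its one-round ordering offset so that every move demanded of the real cops is legal and the final shadow capture lands, in real time, on the still-present robber rather than on a vertex it has just vacated. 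The only genuinely structural input is (P2); everything else is careful index-chasing.
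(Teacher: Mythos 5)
Your proposal is correct, but note that the paper does not actually prove this lemma: it is imported verbatim as Lemma~2.2 of \cite{joret2010cops}, so what you have written is a self-contained proof of a cited result, in the same projection/simulation spirit as the original reference. The structural facts (P1)--(P3) are right (from $(v,u)$ the unique red edge goes to $(u,v)$ and back, so two consecutive $H$-moves cross at most one red edge), and the double-speed bookkeeping does close up: with your alignment, the real cops' move in real round $t$ realizes the shadow-cop moves of shadow rounds $2t-2$ and $2t-1$, which depend only on the shadow robber's positions through $(R^{t-1},R^{t-2})$, i.e.\ on real-robber data available before round $t$, and (P2) makes the compressed move legal. The one genuinely delicate point is the one you flag: because of the lag, a shadow capture at an even shadow round only puts the real cop on the robber's \emph{previous} vertex, and without care this degenerates into a cop trailing one step behind forever. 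Your tweak fixes it: once a shadow cop stands on the shadow robber, it can keep moving onto the robber's previous shadow vertex every round (the robber's new position is always adjacent to or equal to its old one), so it occupies $\rho^{s-1}$ at every subsequent shadow round $s$, in particular at an odd round $s=2t-1$; under your alignment $c_i^t=\pi(\gamma_i^{2t-1})=\pi(\rho^{2t-2})=R^{t-1}$, which is a genuine capture on $G$ since the cops move first. So the argument is sound as sketched; to make it fully rigorous one only needs to write out this index bookkeeping explicitly, which is exactly the part you deferred.
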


We are now prepared to prove Theorem~\ref{Thm: path to claw reduction}.

\begin{proof}[Proof of Theorem~\ref{Thm: path to claw reduction}]
    Let $G$ be a graph whose longest path is length $p$ and $c(G) \geq t$.
    Let $H$ be the clique substitution of $G$.
    By Lemma~\ref{Lemma: Clique-Sub Neighbourhood} the neighbourhood of no vertex induces a $\claw$ or a $\butterfly$.
    Both of these graphs have a universal vertex so $H$ is $(\claw,\butterfly)$-free.
    By Lemma~\ref{Lemma: Clique-Sub C_4,C_5 free} $H$ is $(C_4,C_5)$-free and by Lemma~\ref{Lemma: Clique-Sub induced paths} $H$ is $P_{2p+1}$-free.
    By Lemma~\ref{Lemma: Clique-Sub cop num} $c(H) \geq t$.
    Thus, $H$ is a $(P_{2p+1},\claw,\butterfly,C_4,C_5)$-free graph with $c(G)\geq t$.
\end{proof}

\section{$(P_k,\E \,)$-free Graphs}

The goal of this section is to show how $\lceil\frac{k-1}{2} \rceil+3$ cops can capture the robber in an
$\E$-free graph.
Our argument will be reminiscent of the one in Section~3.
As a result we begin by considering which induced flails can exist in an $\E$-free graphs.

\begin{figure}[h!]
    \centering
    \includegraphics[scale = 1.05]{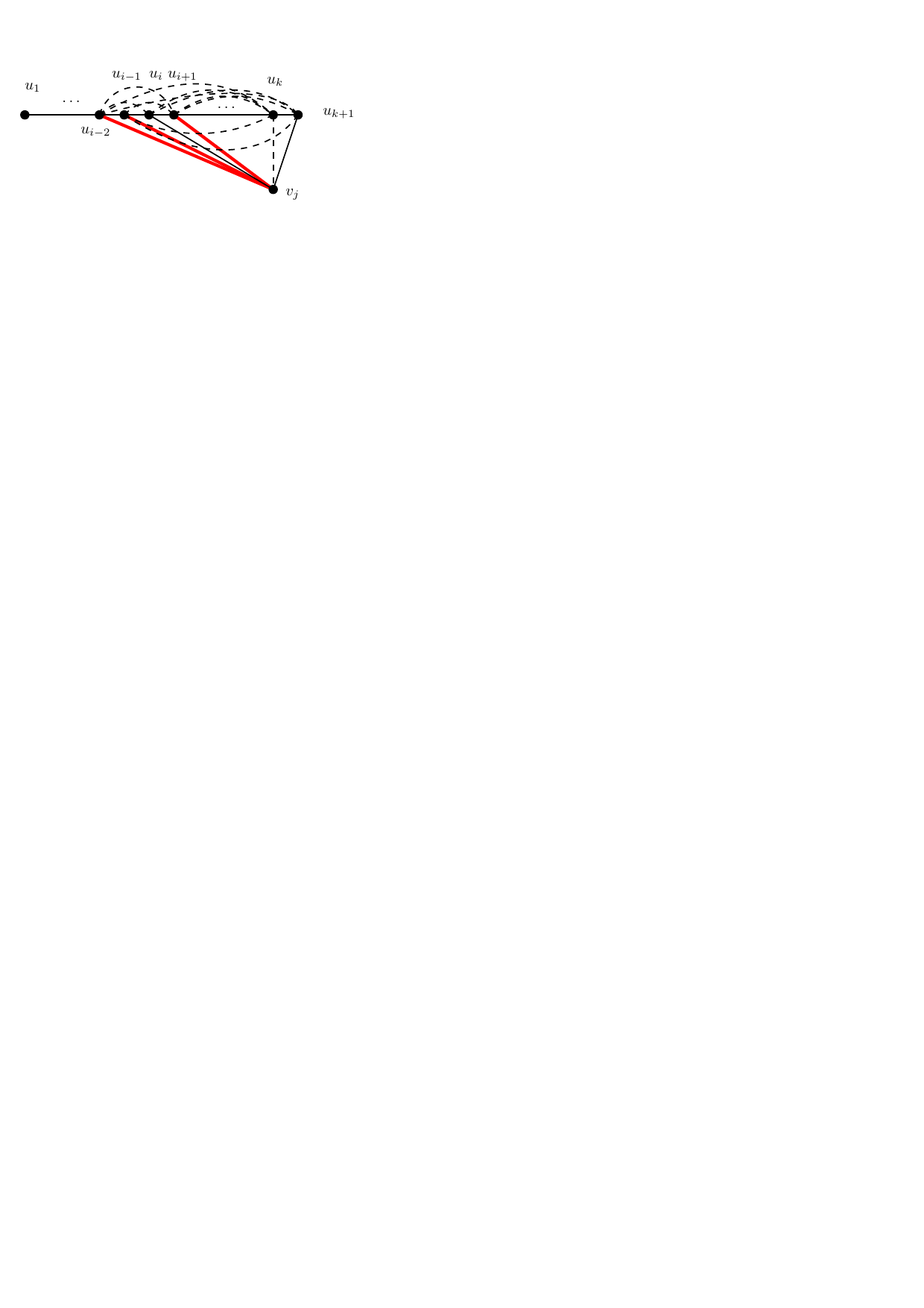}
    \caption{
    The figure depicts the flail discussed in Lemma~\ref{Lemma: E-free flail}. The lemmas claim at least one of a certain set of edges must exist,
    these edges are drawn as red and bold.
    }
    \label{fig:EFlail}
\end{figure}

\begin{lemma}\label{Lemma: E-free flail}
    Let $G$ be a $\E$-free graph and $H$ an induced subgraph of $G$.
    If $H$ is a $(k,t,S)$-flail
    such that $k\geq 6$, and
    \[\{ (k,j): j \in [t]\} \cap S = \emptyset,\]
    and $(i,j),\in S$ for some $3\leq i \leq k-3$ and $j$, 
    then $\{(i-1,j),(i+1,j)\} \cap S \neq \emptyset$ or $(i-2,j)\in S$.
\end{lemma}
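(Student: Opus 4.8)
The plan is a direct contradiction argument, in the style of the proof of Lemma~\ref{Lemma: Flail claw Lemma}. Suppose $(i,j)\in S$ with $3\le i\le k-3$ but $(i-1,j),(i+1,j),(i-2,j)\notin S$; equivalently, $v_j$ is adjacent to $u_i$ but to none of $u_{i-1},u_{i+1},u_{i-2}$. I would then exhibit an induced $\E$ inside $H$, which, since $H$ is an induced subgraph of $G$, is an induced $\E$ in $G$ and contradicts $G$ being $\E$-free.

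The induced $\E$ I would point to sits on the six vertices
\[
\{\,u_{i-2},\ u_{i-1},\ u_i,\ u_{i+1},\ u_{k+1},\ v_j\,\},
\]
viewed as a spider centred at $u_i$ with two legs of length two, namely $u_i\,u_{i-1}\,u_{i-2}$ and $u_i\,v_j\,u_{k+1}$, together with one leg of length one, namely $u_i\,u_{i+1}$. The steps, in order, are: (i) check the six vertices are distinct, which holds because $i-2\ge 1$ and $i+1\le k-2$ make the path indices legal and pairwise different, $u_{k+1}$ differs from each of $u_{i-2},\dots,u_{i+1}$ since $i\le k-3$, and $v_j$ is off the path by the definition of a flail; (ii) verify the five required edges $u_{i-2}u_{i-1}$, $u_{i-1}u_i$, $u_iu_{i+1}$ (edges of the path $P$), $u_iv_j$ (since $(i,j)\in S$), and $v_ju_{k+1}$ (since $v_j\in N(u_{k+1})$); (iii) verify that every remaining pair is a non-edge — the pairs inside $\{u_{i-2},u_{i-1},u_i,u_{i+1},u_{k+1}\}$ are non-edges because $P$ is an \emph{induced} path and the relevant indices differ by at least two (it is exactly the bound $i\le k-3$ that forces $u_{k+1}$ to be non-adjacent to each of $u_{i-2},u_{i-1},u_i,u_{i+1}$), and $u_{i-2}v_j$, $u_{i-1}v_j$, $u_{i+1}v_j$ are non-edges by the standing assumption. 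Hence the subgraph induced by these six vertices has precisely the edge set of a spider with legs $2,2,1$, i.e.\ an $\E$.

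I do not expect a genuine obstacle beyond identifying the correct six vertices. The instructive point is that the literal analogue of the argument for Lemma~\ref{Lemma: Flail claw Lemma}, which would try to realise the $\E$ on $\{u_{i-2},u_{i-1},u_i,u_{i+1},u_{i+2},v_j\}$, fails precisely when $v_j$ happens to be adjacent to $u_{i+2}$ — which is not excluded here, since the ambient graph need not be $C_4$-free in this section. Replacing $u_{i+2}$ by $u_{k+1}$ is exactly what being inside a flail buys us ($v_j\in N(u_{k+1})$), and the verification above shows this replacement works uniformly for every $i$ in the allowed range; in fact the only hypotheses this particular argument needs are $i\ge 3$, so that $u_{i-2}$ exists, and $i\le k-3$.
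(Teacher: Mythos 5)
Your proof is correct and follows essentially the same route as the paper: exhibit an induced $\E$ on $\{u_{i-2},u_{i-1},u_i,u_{i+1},v_j,u_{k+1}\}$ using the assumed non-edges, contradicting $\E$-freeness. In fact your six-vertex set is slightly cleaner than the paper's, which also includes $u_k$ (a seven-vertex set whose induced subgraph is a spider with legs $2,1,3$ rather than literally $\E$), and your observation that the hypothesis $\{(k,j):j\in[t]\}\cap S=\emptyset$ is not needed for this range of $i$ is accurate.
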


\begin{proof}
     Let $G$ be a $\E$-free graph and $H$ an induced subgraph of $G$.
     For contradiction suppose that 
     $H$ is a $(k,t,S)$-flail
    such that $k\geq 6$, and
    \[\{ (k,j): j \in [t]\} \cap S = \emptyset,\]
    and $(i,j),\in S$ for some $3\leq i \leq k-3$ and $j$.
    Moreover, suppose that 
    \[
    (i-2,j),(i-1,j),(i+1,j) \notin S.
    \]
    Then $H[\{u_{i-2},u_{i-1},u_i,u_{i+1},v_j,u_{k+1},u_k\}]$ induces a graph isomorphic to $\E$.
    This contradicts $G$ being $\E$-free.
\end{proof}

We are now prepared to prove Theorem~\ref{Thm: E free}.
The proof is very similar to that of Theorem~\ref{Thm: path,claw,butterfly free}, 
however there are more cops, and the cops' strategy for capturing is different.

\begin{proof}[Proof of Theorem~\ref{Thm: E free}]

Let $G$ be a $(P_k,\E\,)$-free graph.
We will show how  $\lceil\frac{k-1}{2}\rceil+3$ cops can capture the robber, no matter how the robber plays.
The cops begin the game with all $\lceil\frac{k-1}{2}\rceil+3$ cops on a fixed but arbitrary vertex $w_0$.
Let $v_0$ denote the starting position of the robbers.
Label the cops $C^\uparrow,C^\downarrow, C^\Downarrow, C^0,\dots, C^{\lceil\frac{k-1}{2}\rceil-1}$.
We denote the robber by $R$.

If $\dist(w_0,v_0) = 1$, then the cops capture on their first turn.
Assume without loss of generality that the robber never deliberately moves adjacent to a cop, as this is losing for the robber.
Then $\dist(w_0,v_0)\geq 2$.
If $\dist(w_0,v_0) = 2$, then proceed to Step~2 of the cops' strategy.
Otherwise, $\dist(w_0,v_0) > 2$ in which case proceed to Step~1 of the cops' strategy.

\vspace{0.25cm}
\noindent\underline{Step.1:} We suppose $\dist(w_0,v_0) > 2$, all cops are on $w_0$, the robber is on $v_0$, and it is the cops turn to move.
\vspace{0.25cm}

This step proceeds exactly as in the proof of Theorem~\ref{Thm: path,claw,butterfly free}, except that we place a cop on every other vertex rather than on every third vertex.
That is, cop $C^j$ will occupy vertex $w_{i-2j}$ before the cops move on turn $i$.
Also, when reaching Step 2, we assume the robber is adjacent to a vertex of the path $w_0\dots w_t$, which is an easy corollary of the argument,
but was not expressly claimed in Theorem~\ref{Thm: path,claw,butterfly free}.

\vspace{0.25cm}
\noindent\underline{Step.2:} It is the cops turn, vertices $w_0\dots w_t$ form an induced path, each cop $C^j$ occupies vertex $w_{t-2j}$, 
where $w_{-x} = w_0$ for any positive $x$, 
cop $C^\uparrow$ occupies vertex $w_t$,
cops $C^\downarrow$ and $C^\Downarrow$ occupy vertex $w_0$,
and the robber is adjacent to some vertex $w_q$.
\vspace{0.25cm}

This step proceeds similarly to Step~2 in the proof of Theorem~\ref{Thm: path,claw,butterfly free}, but with subtle differences.
As a result we focus primarily on the differences between the cops' strategy here versus Theorem~\ref{Thm: path,claw,butterfly free}.

If $w_q$ contains a cop, then the game is over.
Suppose then that $w_q$ does not contain a cop.
Hence, there is a cop on $w_{q-1}$ and $w_{q+1}$.

As in Theorem~\ref{Thm: path,claw,butterfly free}, we designate a `lead cop' $\mathcal{C}^0$. 
Say this is the cop on $w_{q+1}$ without loss of generality.
Let $\mathcal{C}^\uparrow$ and $\mathcal{C}^\downarrow$ be defined as in Theorem~\ref{Thm: path,claw,butterfly free}.
We let $u_i$ and $r_i$ be defined as in Theorem~\ref{Thm: path,claw,butterfly free}.

All cops walk to $w_q$ along the path $w_0\dots w_t$.
Notice that before the cops move in turn $2$ there is already a cop,
distinct from $\mathcal{C}^0$,$\mathcal{C}^\uparrow$ or $\mathcal{C}^\downarrow$, on the vertex $u_1$.
This cop began on $w_{q-1}$.
Let $\mathcal{C^*}$ be the label for this cop.
Before moving on turn $2$, $\mathcal{C}^0$ is on vertex $w_q = u_1$.
On turn $2$, $\mathcal{C}^0$ moves to $r_0$ setting $r_0 = u_2$.
Cop $\mathcal{C^*}$ remains on $u_1$.

In future turns the cop $\mathcal{C}^0$ will move as in the proof of Theorem~\ref{Thm: path,claw,butterfly free},
building a Gy\'arf\'as path $u_1\dots u_i$ by chasing the robber.
Notice that in Theorem~\ref{Thm: path,claw,butterfly free}, $\mathcal{C}^0$ builds a Gy\'arf\'as path $u_0\dots u_i$,
so the game may proceed one extra turn, since it takes the cop $\mathcal{C}^0$ in this proof one turn to get into position to begin their Gy\'arf\'as path.
The cop $\mathcal{C}^*$ always remains one step behind $\mathcal{C}^0$, 
so before the cops move in turn $i+1$, $\mathcal{C}^*$ occupies vertex $u_{i-1}$.

Since the remaining cops proceed to $u_1$ along $w_0\dots, w_t$,
two cops will arrive on each odd number turn,
until all the cops from $w_0\dots w_{q-1}$ or $w_{q+1}\dots w_{t}$
have arrived.
On the final odd numbered turn where cops are arriving along $w_0\dots w_{q-1}$ and $w_{q+1}\dots w_{t}$
three cops will arrive.
This is because either $\mathcal{C}^\uparrow$ or $\mathcal{C}^\downarrow$
will also arrive on this turn.
The first of $\mathcal{C}^\uparrow$ or $\mathcal{C}^\downarrow$ to arrive behaves like all other cops,
while the second to arrive remains on $u_1$ for the rest of the game, unless to capture.

Once cops arrive at $u_1$,
they move in single file 
along the path $u_1\dots u_i$, so that each vertex $u_{i-2j}$ contains a cop,
with one caveat: 
if $i\geq 2$ is even, then when moving in turn $i+1$ the cop which is next in line moves to $u_2$ one turn early, so that there is always a cop on $u_2$. 
Call this cop $\mathcal{C}$.
On the subsequent turn $i+2$, $\mathcal{C}$ remaining on $u_2$.
After this, $\mathcal{C}$ continues along the path $u_1\dots u_i$ moving to a new vertex each turn.
Since two cops arrive at $u_1$ on each odd numbered turn until an odd numbered turn where three cops arrive 
it is trivial to verify this strategy implies that
for all $i \leq k-2$  prior to moving in turn $i+1$ every vertex
\[
\{u_1,u_2\} \cup \{u_{i-2j}: j\geq 0\} \cup \{u_{i-1}\}
\]
contains a cop.
Our assumption that $i\leq k-2$ is key since if the path is much longer than this, we will not have enough cops to cover the entire path.

Suppose $1 \leq i \leq k-2$ is the smallest integer such that, while the cops are following this strategy,
the robber's vertex $r_i$ has a neighbour
on the path $u_1\dots, u_{i}$.
Observe that as until this happens the cops can continue growing their Gy\'arf\'as path by chasing the robber along the robber's previously visited vertices,
so it is safe to assume $u_1 \dots u_i$ is an induced path.
When $i \leq 5$, there is a cop on every vertex of $\{u_1,\dots, u_i\}$ since 
\[
\{u_1,\dots, u_i\}\subseteq \{u_1,u_2\} \cup \{u_{i-2j}: j\geq 0\} \cup \{u_{i-1}\}.
\]
Hence,  $i\geq 6$ as otherwise the robber is adjacent to a cop on the cops turn.

Without loss of generality suppose $\ell$ is the least integer such that $u_\ell \in N(r_i)$.
Suppose without loss of generality that the robber is not adjacent to a cop.
Then that $u_{\ell}$ does not contain a cop 
implying $3 \leq \ell \leq i-3$ and $\ell \not\equiv i \pmod{2}$.
Since $i$ is chosen to be as small as possible
$u_1\dots u_{i}r_{i-1}$ is an induced path,
and 
$H = G[\{u_1,\dots, u_{i},r_{i-1}\} \cup \{r_i\}]$ is a $(i,1,S)$-flail
where 
\[
S \cap \left(\{1,2\} \cup \{ i -2j: j\geq 0\} \cup \{i-1\} \right) = \emptyset.
\]
Hence, $\{(\ell-1,1),(\ell+1,1)\}\cap S = \emptyset$.
Since $G$ is $\E$-free, $H$ is $\E$-free.
So by Lemma~\ref{Lemma: E-free flail}, $(\ell-2,1)\in S$, implying $u_{\ell-2}\in N(r_i)$.
This contradicts the minimality of $\ell$.

Therefore, 
we conclude that if $i\leq k-2$,
the robber moves to a vertex $r_i$ such that $N(r_i) \cap \{u_1,\dots, u_i\} \neq \emptyset$,
then they are moving adjacent to a cop.
Since this is losing for the robber, the robber will delay such a move as long as possible.
But each time the robber delays moving adjacent to $\{u_1,\dots, u_i\}$
this allows the cops to make a longer induced path by tracing the steps of the robber.
If this lasts until $i=k-1$,
then $u_1\dots u_{k-2}r_{k-3} r_{k-2}$ is an induced path of length $k$ in $G$,
contradicting that $G$ is $P_k$-free.
This concludes the proof.
\end{proof}

\section{Future Work}

We conclude with a discussion of open problems.
Given, Theorem~\ref{Thm: main longest path} it is natural to ask if there are graphs whose longest path is length $p$ with cop number $\Omega(p)$.
Recall that proving no such graphs exist, that is $c(G) = O(p^{1-\epsilon})$ for all graphs $G$ whose longest path is length $p$,
would imply the weak Meyniel conjecture.

\begin{conjecture}\label{Conj: linear lower in path length}
    There exists an $\epsilon>0$ such that 
    for all integer $p\geq 1$, there is a graph $G$ whose longest path is length $p$ with $c(G)\geq \epsilon p$.
\end{conjecture}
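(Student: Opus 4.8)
The plan is to split the task in two: (i)~build, at an unbounded sequence of lengths, graphs with large cop number whose longest path is short, and then (ii)~fill in the intermediate lengths by padding. For (ii), write $\ell(G)$ for the number of vertices on a longest path of a connected graph $G$. If $H$ is connected and $H'$ is obtained from $H$ by attaching to a vertex $v$ a pendant path on $m$ new vertices, then $c(H')=c(H)$: the graph $H$ is a retract of $H'$ (delete the pendant path one dominated vertex at a time), so $c(H')\geq c(H)$; and since the robber gains nothing by hiding in a pendant path (a cop drawn to $v$ by an optimal $H$-strategy then clears it), $c(H')\leq c(H)$. Moreover $\ell(H')=\max\{\ell(H),\,m+d_v\}$, where $d_v$ is the number of vertices on a longest path of $H$ ending at $v$, so varying $m$ realizes every value of $\ell(H')$ at least $\ell(H)$. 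Hence it suffices to produce graphs $H_1,H_2,\dots$ with $c(H_i)\geq \epsilon_0\,\ell(H_i)$, with $\ell(H_i)\to\infty$, and with the lengths not too sparse (say $\ell(H_{i+1})\leq C\,\ell(H_i)$): for a target $p$ one pads the largest $H_i$ with $\ell(H_i)\leq p$ up to length exactly $p$, obtaining a graph with longest path on $p$ vertices and cop number $c(H_i)\geq\epsilon_0\,\ell(H_i)=\Omega(p)$.

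The substance is therefore to construct, for arbitrarily large $p$, a graph $G$ with $\ell(G)=O(p)$ and $c(G)=\Omega(p)$. The first thing I would try is to reuse the Clow--Zaguia diameter-$2$ graphs $G_\ell$ with $c(G_\ell)=\alpha(G_\ell)=\ell$: these are $P_{2\ell+1}$-free, so their \emph{induced} paths are short. But, being dense, they are traceable, so a longest \emph{subgraph} path of $G_\ell$ has $\Theta(|V(G_\ell)|)$ vertices, and $|V(G_\ell)|$ is at least quadratic in $\ell$, as it is for any diameter-$2$ graph with cop number $\ell$. Combined with the padding step this yields at best $c(G)=\Omega(\sqrt p)$ for graphs with $\ell(G)=p$, only a sublinear, weak form of the conjecture. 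To reach $\Omega(p)$ one must instead hide the cop-number-forcing structure inside a graph of small tree-depth --- recall that $\ell(G)\leq p$ forces $G$ to have a depth-first spanning tree of depth at most $p$, hence tree-depth at most $p$ --- while keeping the longest subgraph path linear, rather than exponential, in the tree-depth. A concrete attempt: a recursively clustered graph whose clusters have $\Theta(p)$ vertices and are glued along a shallow rooted tree, with enough adjacency within and between adjacent clusters to force $\Omega(p)$ cops (certified by a robber strategy that always keeps a whole cluster of safe vertices in reserve), but with the gluing interfaces small and acyclic enough that every path meets only $O(1)$ clusters and so has $O(p)$ vertices.

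The hard part, and why this remains a conjecture, is that ``many cops'' and ``no long path'' pull against each other: by Erd\H{o}s--Gallai a graph with no $P_{p+1}$ subgraph is sparse, whereas every known mechanism for forcing a large cop number --- dense random graphs, incidence graphs of projective planes, Moore graphs, high-girth high-minimum-degree graphs --- produces (near-)traceable graphs, whose longest path has polynomially more vertices than their cop number. As observed after Theorem~\ref{Thm: main longest path}, the weaker statement $c(G)=\Omega(p^{1-o(1)})$ would follow from a refutation of the weak Meyniel conjecture; the conjecture as stated asks for a \emph{linear} lower bound, and proving it unconditionally appears to need a genuinely new construction that, unlike all known high-cop-number graphs, keeps the longest path linear in the cop number.
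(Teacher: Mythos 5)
This statement is Conjecture~\ref{Conj: linear lower in path length}: the paper does not prove it, and neither do you --- and to your credit, your write-up essentially admits this. The genuine gap is exactly the part you label ``the substance'': constructing, for arbitrarily large $p$, a graph whose longest path has $O(p)$ vertices but whose cop number is $\Omega(p)$. That construction \emph{is} the conjecture (your padding step only transfers it between nearby values of $p$), and your sketch of a ``recursively clustered'' graph with small interfaces is a heuristic wish, not an argument: you give no robber strategy forcing $\Omega(p)$ cops, no verification that every path meets only $O(1)$ clusters, and no reason the two requirements are compatible. The concrete route you do carry out --- padding the Clow--Zaguia diameter-$2$ graphs --- provably cannot give more than $c(G)=\Omega(\sqrt p)$, since (as you note) a diameter-$2$ graph with cop number $\ell$ has $\Omega(\ell^2)$ vertices and these dense graphs have longest paths of length $\Theta(n)$; so it falls short of the linear bound by a polynomial factor. (Minor caveats in the parts you do argue: the claim $c(H')=c(H)$ under attaching a pendant path is correct but should be justified via the standard corner/retract lemma, i.e.\ $c(G)\leq\max\{c(G-u),1\}$ for a dominated vertex $u$, rather than the loose phrase ``a cop drawn to $v$ then clears it''; and ``being dense, they are traceable'' is plausible for the random construction but is asserted, not proved --- though for the purpose of a lower-bound attempt this only hurts you, since a long path is what blocks the approach.)

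So the honest summary is: you have a correct and mildly useful reduction (it suffices to find such graphs for a multiplicatively dense set of path lengths, by pendant-path padding), a correct explanation of why all known high-cop-number constructions fail (they are near-traceable, so their longest path is polynomially larger than their cop number, consistent with the $c(G)\leq\lceil 2p/3\rceil+3$ bound of Theorem~\ref{Thm: main longest path} being far from tight for them), and no proof of the conjecture itself. That matches the status of the statement in the paper, where it is posed as an open problem; your text would serve as a discussion of the obstruction, not as a proof.
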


Next we recall that
Theorem~\ref{Thm: E free} implies that all $(P_k,\claw)$-free graphs have cop number at most $\lceil \frac{k-1}{2}\rceil+3$.
Do there exists $(P_k,\claw)$-free graphs with cop number $(\frac{1}{2}-o(1))k$?
We note that the random, diameter $2$, $P_k$-free graphs with cop number $\lfloor\frac{k-1}{2}\rfloor$
constructed in \cite{clow2025cops} are not $\claw$-free with high probability.
It seems hard to construct $P_k$-free graphs with large cop number, 
so deciding if there are $(P_k,\claw)$-free graphs with such a cop number may be out of immediate reach.
Instead we make an easier to prove conjecture.

\begin{conjecture}\label{Conj: P_k,claw}
    There exists an $\epsilon>0$ such that 
    for all integer $k\geq 1$, there is a $(P_k,\claw)$-free graph $G$ with $c(G)\geq \epsilon k$.
\end{conjecture}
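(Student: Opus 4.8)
\medskip
\noindent\textbf{Proof proposal.} My plan is to deduce Conjecture~\ref{Conj: P_k,claw} from Conjecture~\ref{Conj: linear lower in path length} via the clique substitution, and then to outline how one might attack the latter (or, failing that, attack Conjecture~\ref{Conj: P_k,claw} directly). Suppose that for every $p$ there is a connected graph $G_p$ whose longest path has length $p$ and with $c(G_p)\ge\epsilon' p$ for a fixed $\epsilon'>0$. Applying Theorem~\ref{Thm: path to claw reduction} to $G_p$ yields a $(P_{2p+1},\claw,\butterfly,C_4,C_5)$-free graph $H_p$ with $c(H_p)\ge\epsilon' p$. Given $k\ge 4$, take $p=\lfloor\tfrac{k-1}{2}\rfloor$: then $H_p$ is in particular $(P_k,\claw)$-free, and $c(H_p)\ge\epsilon' p\ge\tfrac{\epsilon'}{4}k$, while the finitely many small values of $k$ are trivial. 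Hence $\epsilon=\epsilon'/4$ establishes Conjecture~\ref{Conj: P_k,claw}, so the task reduces to building, for every $p$, a graph that is simultaneously ``short'' (longest path of length $p$) and ``cop-heavy'' ($c=\Omega(p)$).

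For this, note first that the cop number of any $n$-vertex graph is $o(n)$ by the known sub-linear Meyniel-type bounds, so a witness with longest path $p$ and $c=\Omega(p)$ must have $n=\omega(p)$ vertices --- it must be ``wide'', with many vertices but no long path. The extremal high-cop-number families --- incidence graphs of projective planes and generalized polygons, and their Bollob\'as--Kun--Leader-type random variants --- are girth $\ge 5$ with minimum degree $\delta=\Theta(\sqrt n)$, and for girth $\ge 5$ one has $c(G)\ge\delta(G)$ by the standard local argument, so these graphs have cop number $\Theta(\sqrt n)$; the trouble is that they appear to contain very long (probably Hamiltonian) paths. I would therefore try to (i) locate an incidence structure whose incidence graph contains no long path, or (ii) perform surgery on a projective-plane incidence graph --- deleting or rerouting a carefully chosen edge set, randomly sparsifying, or assembling many bounded-diameter pieces of girth $\ge 5$ along a sparse backbone --- so as to destroy all long paths while keeping girth $\ge 5$ and minimum degree $\Omega(\sqrt n)$, hence $c=\Omega(\sqrt n)$. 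A clean milestone would be a single graph of girth $\ge 5$, minimum degree $\delta$, and longest path $O(\delta)$.

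One can also attempt Conjecture~\ref{Conj: P_k,claw} directly: since the Clow--Zaguia graphs are $P_{2\ell+1}$-free simply because $\alpha(G)\le\ell$ forces $P_{2\ell+1}$-freeness, it would suffice to produce \emph{claw-free} graphs with $\alpha(G)=\ell$ and $c(G)=\Omega(\ell)$ --- for instance via a random claw-free model, or a line-graph construction --- but it is unclear whether claw-freeness together with bounded independence number is even compatible with unbounded cop number, since already for $\ell=2$ the class is exactly the complements of triangle-free graphs, which are $P_5$-free and hence have cop number at most $2$. In either route, the main obstacle is the one the paper flags as hard: a genuinely new construction that is ``thin'' in the relevant sense (short longest path, or small independence number) yet ``cop-heavy'', whereas every known high-cop-number graph is ``fat'' (long paths, large independence number). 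I expect route (ii) --- surgery on projective-plane incidence graphs --- to be the most promising, since it starts in the correct $\Theta(\sqrt n)$ cop-number regime and only asks us to eliminate long paths without destroying the local structure that drives the cop number up.
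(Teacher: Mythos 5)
The statement you set out to prove is not a theorem of the paper: it is Conjecture~\ref{Conj: P_k,claw}, stated in the Future Work section precisely because no proof is known, so there is no proof in the paper to compare against and any complete argument would be new research. Your proposal does not supply one. Its only rigorous content is the reduction: assuming Conjecture~\ref{Conj: linear lower in path length}, apply Theorem~\ref{Thm: path to claw reduction} to a graph $G_p$ with longest path of length $p$ and $c(G_p)\geq \epsilon' p$ to obtain a $(P_{2p+1},\claw,\butterfly,C_4,C_5)$-free graph $H_p$ with $c(H_p)\geq \epsilon' p$; taking $p=\lfloor\frac{k-1}{2}\rfloor$, any induced $P_k$ would contain an induced $P_{2p+1}$, so $H_p$ is $(P_k,\claw)$-free and $c(H_p)\geq \frac{\epsilon'}{4}k$ for $k\geq 4$. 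That step is correct, and it is a worthwhile observation that Conjecture~\ref{Conj: linear lower in path length} implies Conjecture~\ref{Conj: P_k,claw} (this is essentially why Theorem~\ref{Thm: path to claw reduction} sits next to these conjectures). But the hypothesis you condition on is itself open and, by your own reduction, at least as strong as the target, so nothing unconditional has been established.

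The remainder is a research programme rather than a proof. The proposed ``surgery'' on projective-plane incidence graphs is speculative: you give no construction, and the obstruction is exactly the one you name --- the lower bound $c(G)\geq\delta(G)$ for girth at least $5$ is driven by local density, and any edge deletion or sparsification aggressive enough to cap the longest path at $O(\delta)$ has no known way of preserving minimum degree $\Omega(\sqrt{n})$ and girth $\geq 5$ simultaneously; no graph with longest path $O(\delta)$, girth $\geq 5$, and minimum degree $\delta$ is currently known to exist. The direct route via claw-free graphs with small independence number is likewise only a suggestion, and your own aside shows why it is delicate: for $\alpha\leq 2$ the class is $P_5$-free, hence has cop number at most $2$, so it is not even clear that claw-freeness plus bounded $\alpha$ is compatible with large cop number. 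In short, the proposal correctly identifies a valid conditional reduction and the genuine difficulty (all known cop-heavy graphs contain long induced paths and have large independence number), but it does not prove Conjecture~\ref{Conj: P_k,claw}, which remains open.
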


Of course, it would also be of interest if one can prove there are $P_k$-free graphs with cop number more than 
$\lfloor\frac{k-1}{2}\rfloor$ when $k\geq 6$.
Do such graphs exist?
If not, then demonstrating this would prove a much stronger, and best possible, version of Sivaraman's conjecture.

\begin{problem}\label{Prob: P_k}
    For all $k\geq 6$ demonstrate a $P_k$-free graph whose cop number is greater than $\lfloor\frac{k-1}{2}\rfloor$, or prove no such graphs exist.
\end{problem}

\bibliographystyle{abbrv}
\bibliography{bib}

\end{document}